\newtheorem{thm}{Theorem}[section]
\newtheorem{cor}[thm]{Corollary}
\newtheorem{lem}[thm]{Lemma}
\newtheorem{prop}[thm]{Proposition}
\newtheorem{ques}[thm]{Question}
\theoremstyle{definition}
\newtheorem{defin}[thm]{Definition}
\newtheorem{rem}[thm]{Remark}
\numberwithin{equation}{section}
\begin{document}


\baselineskip=20pt


\title{$(\omega, \alpha, n)$-sensitivity and limit sets of zero entropy homeomorphisms on the square}

\author[J.~Mai]{Jiehua Mai}
\address{School of Mathematics and Quantitative
Economics, Guangxi University of Finance and Economics, Nanning,
Guangxi, 530003, P. R. China \& Institute of Mathematics, Shantou
University, Shantou, Guangdong, 515063, P. R. China}
\email{jiehuamai@163.com; jhmai@stu.edu.cn}

\author[E.~Shi]{Enhui Shi}
\thanks{*Corresponding author}
\address{School of Mathematics and Sciences, Soochow University, Suzhou, Jiangsu 215006, China}
\email{ehshi@suda.edu.cn}

\author[K.~Yan]{Kesong Yan*}
\address{School of Mathematics and Statistics, Hainan Normal
University,  Haikou, Hainan, 571158, P. R. China}
\email{ksyan@mail.ustc.edu.cn}

\author[F.~Zeng]{Fanping Zeng}
\address{School of Mathematics and Quantitative
Economics, Guangxi University of Finance and Economics, Nanning, Guangxi, 530003, P. R. China}
\email{fpzeng@gxu.edu.cn}

\begin{abstract}
For a homeomorphism $f$ of a compact metric space $X$ and a positive integer $n\geq 2$, we introduce the notion of $(\omega, \alpha, n)$-sensitivity of $f$,
which describes such a kind of chaos: there is some $c>0$ such that for any $x\in X$ and any open neighborhood $U$ of $x$, there are points $\{x_i\}_{i=1}^n$ and $\{y_i\}_{i=1}^n$ in $U$ such that both the collection of $\omega$-limit sets $\omega(x_i, f)$ and that of the $\alpha$-limit sets $\alpha(y_i, f)$
are pairwise $c$-separated. Then we construct a class of homeomorphisms of the square $[-1, 1]^2$ which are $(\omega, \alpha, n)$-sensitive for any $n\geq 2$ and have zero topological entropies. To investigate further the complexity of zero entropy homeomorphisms by using limit sets, we analyze in depth the limit sets of square homeomorphisms  by the boundary permeating technique. Specially, we prove that for any given set of points $Y\equiv\{y_{n1}, y_{n2}:n\in\mathbb N\}$ in $(-1, 1)^2$ which satisfies some loosely technical conditions, and for any given family of pairwise disjoint countable
dense subsets $\{W_n:n\in\mathbb N\}$ of $(-1, 1)^2-Y$, there is a zero entropy homeomorphism $f$ on the square $[-1, 1]^2$ such that
$\omega(x, f)=\{y_{n1}\}$ and $\alpha(x, f)=\{y_{n2}\}$ for any $n$ and any $x\in W_n$.
\end{abstract}

\keywords{ topological entropy, sensitivity, chaos,  square, $\omega$-limit set, $\alpha$-limit set}

\subjclass[2010]{37E30}

\maketitle

\pagestyle{myheadings} \markboth{J. Mai, E. Shi, K. Yan and F.
Zeng}{$(\omega, \alpha, n)$-sensitivity and limit sets}

\section{Introduction}

Positive topological entropy is commonly regarded as a chaotic property. This is
reasonable as a map of positive entropy really exhibits many complex dynamical behaviors.
It is known that positive entropy implies Li-Yorke chaos\  \cite{BGKM02, KL07},
 Chaos DC2 (a variant of distributional chaos)\ \cite{Do14, HLY14}, and the existence of a weak horseshoe\ \cite{HL17}.
However, a map of zero entropy can also be chaotic if we take a different view,  even if the phase space is only
a closed interval (see  \cite{Sm86, Xi86}). Using topological sequence entropy introduced by Goodman \cite{Go74},
people studied the complexity and structure of some special systems of zero entropy \cite{FS91, HLSY03, MS07}.
In 2010, Sarnak \cite{Sa12} formulated the famous M\"obius disjointness conjecture, which establishes a deep
connection between zero entropy systems and number theories. Huang-Wang-Ye \cite{HWY19}
 proved the conjecture for any dynamical system  of sub-polynomial measure complexity.
These greatly stimulates the study of zero entropy systems. For instance, Huang-Xu-Ye \cite{HXY20}
gave a surprising construction of a minimal distal maps on the torus with sub-exponential measure complexity.
One may consult \cite{HLTXY21, LOYZ17, LOZ22} for some related work very recently.
\medskip

Sensitivity on initial conditions (or sensitivity for short) was defined by Guckenheimer \cite{Gu79},
the phrase of which was used by Ruelle in \cite{Ru78}. In fact, the idea of sensitivity can
date back to the earlier work of Poincar\'e and Lorenz. Nowadays, sensitivity has been widely
understood as a key feature of a chaotic system. In the definitions of
Devaney chaos \cite{De89} and  Auslander-Yorke chaos \cite{AY80}, sensitivity appears directly as
an ingredient. It is worth mentioning that a transitive non-periodic system having a dense set of periodic points
is sensitive \cite{BBCDS}, which has been extended to transitive non-minimal systems with dense minimal points \cite{GW93, AAB96}.
One may consult \cite{MYZ24} for a further generalization.  Some extended definitions of sensitivity, such as Li-Yorke sensitivity, $n$-sensitivity,
mean-sensitivity,  multi-sensitivity, and thick sensitivity are introduced and intensively explored
(see e.g. \cite{AK03, HKKZ16,  HKZ18, LTY15, SYZ08, Xi05}).
\medskip

We formulate the following extension of sensitivity using limit sets. The idea of using $\omega$-limit sets
to define chaos already appeared in \cite{Li93}.

\begin{defin}
Let $(X, d)$ be a compact metric space, and
$f:X\rightarrow X$  be a continuous map. For $n \geq 2$ , $f$ is said
to be $(\omega, n)$-{\it sensitive} if there exists $s>0$ such that,
for any $x\in X$ and any neighborhood $U$ of $x$, there exist
$\{y_1, \ldots, y_n\}\subset U$ such that $d(\omega(y_i, f), \omega(y_j, f))>s$
for any $i\not=j$; $f$ is said to be $\omega$-{\it sensitive} if $f$ is $(\omega , 2)$-sensitive.
If $f$ is a homeomorphism, then $f$ is said to be
$(\alpha, n)$-{\it sensitive} (resp. $\alpha$-{\it sensitive}) if $f^{-1}$
is $(\omega, n)$-sensitive (resp. $\omega$-sensitive); $f$ is said to be
$(\omega, \alpha, n)$-sensitive if $f$ is both $(\omega, n)$-sensitive and
$(\alpha, n)$-sensitive; $f$ is said to be $(\omega, \alpha)$-{\it sensitive} if
$f$ is $(\omega, \alpha, 2)$-sensitive.
\end{defin}

For brevity of statement, we call $f$ is $(\omega, \infty)$-{\it sensitive} (resp. $(\omega, \alpha, \infty)$-{\it sensitive})
if $f$ is $(\omega, n)$-sensitive (resp. $(\omega, \alpha, n)$-sensitive) for any $n\geq 2$.

\begin{rem}\label{rem}
(1) There are several facts followed immediately from the definition.
$\omega$-sensitivity is stronger than sensitivity, and $(\omega, n)$-sensitivity
is stronger than $n$-sensitivity defined by Xiong in \cite{Xi05}. If
$f$ is $(\omega, n)$-sensitive, then it has at least $n$ minimal sets; so
no minimal system is $\omega$-sensitive.

(2) It is easy to show that if $f$ is transitive, has infinitely many minimal sets, and
has the pseudo-orbit-tracing property, then it is $(\omega, \infty)$-sensitive. Thus, the maps such as
 transitive Anosov diffeomorphisms on compact manifolds, shifts on symbolic spaces $\{1,\ldots, k\}^{\mathbb Z}$ with $k\geq 2$,
 and covering maps on the circle with degree $\geq 2$ are all $(\omega,  \infty)$-sensitive. It is well known
 that all these maps have positive entropy.

(3) Following the same lines as in the proof of Proposition 2.40 in \cite{Ru18}, we can show that
if $f$ is a sensitive graph map, then there is a nondegenerate $f^n$-invariant subgraph for some $n>0$
the restriction of $f^n$ to which is transitive. Then by results in \cite{Bl87} and \cite{MSh07},
we see that no graph map of zero entropy is sensitive. Specially, no homeomorphism on a graph is sensitive;
this also holds for homeomorphisms on a locally connected continuum with a free arc \cite{MShi12} and the
locally connectedness condition is necessary \cite[Theorem 3.9]{MShi07}.

(4)  Let $A=\mathbb S^1\times [0, 1]$ be the annulus, where $\mathbb
S^1$ is the unit circle in the complex plane. Let $f:A\rightarrow A$
be defined by $$f(e^{2\pi {\rm i}x}, y)=(e^{2\pi {\rm i}(x+y)}, y),\
\ \mbox{for all\ }(x, y)\in \mathbb R\times [0, 1].$$ Then $f$ is
$n$-sensitive for any $n\geq 2$, but is not $\omega$-sensitive.
Clearly, $f$ is a distal homeomorphism and so it is of zero entropy.
\end{rem}

From (2) of Remark \ref{rem}, we see that many important positive entropy maps have $(\omega,  \infty)$-sensitivity;
(3) of the remark indicates that sensitivity does not occur for zero entropy maps on compact $1$-manifolds;
and (4) shows that there exists a zero entropy homeomorphism on a $2$-dimensional compact manifold which is $n$-sensitive for any $n$, but
is not $\omega$-sensitive. So, one may doubt whether there exists an $(\omega,  \infty)$-sensitive homeomorphism of zero
entropy on a compact $2$-dimensional manifold.
We obtain the following result, the proof of which relies on analyzing in depth the limit sets of normally rising homeomorphisms on the square
(see Section \ref{sec-3}).

\begin{thm}\label{main1}
There is a zero entropy homeomorphism on the square $[-1, 1]^2$ which is $(\omega, \alpha, \infty)$-sensitive.
\end{thm}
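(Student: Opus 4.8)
The plan is to reduce Theorem~\ref{main1} to the realization result announced in the abstract and established in Section~\ref{sec-3} via normally rising homeomorphisms. That result produces, from prescribed target points $Y=\{y_{n1}, y_{n2}:n\in\mathbb N\}$ in $(-1,1)^2$ satisfying the stated technical hypotheses and from prescribed pairwise disjoint countable dense subsets $\{W_n:n\in\mathbb N\}$ of $(-1,1)^2-Y$, a zero entropy homeomorphism $f$ of $[-1,1]^2$ with $\omega(x,f)=\{y_{n1}\}$ and $\alpha(x,f)=\{y_{n2}\}$ for every $n$ and every $x\in W_n$. The decisive structural feature is that each prescribed $\omega$- and $\alpha$-limit set is a \emph{singleton}, so separation of limit sets reduces to separation of the target points, while the density of each $W_n$ lets us plant witnesses of any prescribed limit behavior inside an arbitrarily small neighborhood.

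First I would fix the data. Choose $\{y_{n1}\}$ and $\{y_{n2}\}$ to consist of infinitely many distinct points of $(-1,1)^2$ (for concreteness, dense sequences) subject to the technical conditions required by the Section~\ref{sec-3} construction, and choose the $\{W_n\}$ accordingly. Invoking the realization result yields a zero entropy homeomorphism $f$ on $[-1,1]^2$. Since the entropy is already zero by construction, it remains only to verify $(\omega,n)$- and $(\alpha,n)$-sensitivity for every $n\geq 2$.

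Next, to verify $(\omega,n)$-sensitivity, for each fixed $n$ I would select once and for all $n$ distinct indices $m_1,\ldots,m_n$ and set $s_n=\tfrac12\min_{i\neq j} d(y_{m_i 1}, y_{m_j 1})>0$. Given any $x\in[-1,1]^2$ and any neighborhood $U$ of $x$, the set $U\cap(-1,1)^2$ is a nonempty open subset of $\mathbb R^2$, so by density of each $W_{m_i}$ I can pick $x_i\in U\cap W_{m_i}$. Then $\omega(x_i,f)=\{y_{m_i 1}\}$, whence $d(\omega(x_i,f),\omega(x_j,f))=d(y_{m_i 1}, y_{m_j 1})\geq 2s_n>s_n$ for $i\neq j$; as $s_n$ depends only on $n$ and not on $x$ or $U$, this establishes $(\omega,n)$-sensitivity. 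The identical argument with $\{y_{n2}\}$ replacing $\{y_{n1}\}$ and $\alpha$ replacing $\omega$ gives $(\alpha,n)$-sensitivity with constant $s_n'=\tfrac12\min_{i\neq j} d(y_{m_i 2}, y_{m_j 2})$. Taking $c=\min(s_n,s_n')$ yields $(\omega,\alpha,n)$-sensitivity, and since $n$ was arbitrary, $f$ is $(\omega,\alpha,\infty)$-sensitive.

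The main obstacle is not the sensitivity bookkeeping above, which is essentially formal once the limit sets are singletons, but rather checking that a choice of infinitely many distinct target pairs $(y_{n1},y_{n2})$ together with the required dense sets $W_n$ can be made compatibly with the technical conditions imposed by the Section~\ref{sec-3} construction. All the genuine difficulty—producing a square homeomorphism of zero entropy whose forward and backward orbits accumulate exactly at the prescribed singletons—is carried by that construction, to which the present theorem is essentially a corollary.
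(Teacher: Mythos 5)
Your proof is essentially correct, but it takes a heavier route than the paper does. The paper proves Theorem~\ref{main1} already in Section~\ref{sec-3} (see Remark~\ref{main-sensitive}): it applies Corollary~\ref{thm:3-3}/Theorem~\ref{thm:3-5} with the maps $\omega:\mathbb N\to\mathcal A$ and $\alpha:\mathbb N\to\mathcal A'$ chosen so that their images are infinitely many pairwise disjoint closed subsets of the edges $J_1$ and $J_{-1}$; the sensitivity bookkeeping (fix $n$ indices, take half the minimal pairwise distance of the corresponding limit sets as the constant, use density of the sets $S_n$ or $V_n$ to plant witnesses in any neighborhood) and the entropy argument (the nonwandering set consists of fixed points, so $h_{top}=0$ by Walters' Corollary 8.6.1) are exactly the ones you give. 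You instead invoke Theorem~\ref{main2} (i.e.\ Theorem~\ref{cor:4-9}), whose limit sets are singletons in the \emph{interior} of the square. That theorem is not ``established in Section~\ref{sec-3}'' as you say: it needs the boundary-permeating machinery of Section~\ref{Sec-4} and Section 5 on top of Theorem~\ref{thm:3-5}. Your reduction is not circular (Theorem~\ref{main2} nowhere uses Theorem~\ref{main1}), so the argument stands, but it routes through strictly more machinery than necessary; the paper's point in Remark~\ref{main-sensitive} is precisely that interior limit sets are not needed for sensitivity, since pairwise disjoint closed sets lying on the top and bottom edges already do the job.

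Two caveats on your instantiation. First, your parenthetical suggestion to take the targets $\{y_{n1}\},\{y_{n2}\}$ to be \emph{dense} sequences is incompatible with the hypotheses of Theorem~\ref{main2}: condition (1) forces $y_{n1}\to J_1$ and $y_{n2}\to J_{-1}$, and condition (2) (local finiteness of $Y_{\mathbb N}$ over intervals around each $\beta\in p(Y_{\mathbb N})$) fails for any sequence with dense first coordinates. Second, you defer the compatibility check as ``the main obstacle,'' but it is immediate and should be carried out: for instance $y_{n1}=(1-2^{-n},\,1-2^{-n})$ and $y_{n2}=(-1+2^{-n},\,-1+2^{-n})$ satisfy (1) and (2), since the projections accumulate only at $\pm1\notin p(Y_{\mathbb N})$, and pairwise disjoint countable dense subsets $W_n$ of $(-1,1)^2-Y_{\mathbb N}$ are obtained by partitioning any countable dense set avoiding $Y_{\mathbb N}$. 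With these repairs your argument is a valid, if roundabout, proof of the theorem.
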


In order to further investigate the complexity of zero entropy homeomorphisms by using limit sets, we develop the boundary permeating technique
in Section \ref{Sec-4} and introduce the notion of permeable hanging set. We show that a permeable hanging set must be
a dendrite as a topological space, and determine the conditions under which any element of a given family of permeable hanging sets
is the limit set of a countable dense subsets coming from a pre-described family of sets (see Theorem \ref{thm:4-7} for the details). Applying the results in Section \ref{Sec-4}, we obtain the following theorem (Theorem \ref{cor:4-9}).

\begin{thm} \label{main2}
Let $Y_{\mathbb{N}}=\{y_{n1}, y_{n2}: n \in \mathbb{N}\}$ be a
countable infinite set in the open square $(-1, 1)^2$, and
$\{W_n: n \in \mathbb{N}\}$ be a family of pairwise disjoint
countable sets in $(-1, 1)^2-Y_{\mathbb{N}}$, in which each $W_n$ is dense in
$[-1, 1]^2$. Suppose that
\begin{enumerate}
\item[{\rm (1)}] $\lim_{n \rightarrow \infty} d(y_{n1}, J_1)=\lim_{n \rightarrow
\infty} d(y_{n2}, J_{-1})=0$, where $J_i=[-1, 1]\times\{i\}$\  for $i\in \{-1, 1\}$;

\medskip

\item[{\rm (2)}] For any $\beta \in p(Y_{\mathbb{N}})$, there exists
$\varepsilon_{\beta}>0$ such that $Y_{\mathbb{N}} \cap
p^{-1}([\beta-\varepsilon_{\beta}, \beta+\varepsilon_{\beta}])$ is a
finite set, where $p:\mathbb R^2\rightarrow \mathbb R$ is the projection to the
first coordinate.
\end{enumerate}
Then there exists a homeomorphism $\psi: [-1, 1]^2 \rightarrow [-1, 1]^2$ of zero entropy such
that, for any $n \in \mathbb{N}$ and any $w \in W_n$, one has
$\omega(w, \psi)=\{y_{n1}\}$ and $\alpha(w, \psi)=\{y_{n2}\}$.
\end{thm}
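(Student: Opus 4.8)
The plan is to deduce the statement as a specialization of the main construction of Section~\ref{Sec-4}, namely Theorem~\ref{thm:4-7}, by taking every prescribed limit set to be a single point. A singleton is the simplest permeable hanging set: it is a degenerate dendrite, consistent with the general fact established in Section~\ref{Sec-4} that every permeable hanging set is a dendrite. Accordingly I would take the family of hanging sets attached to the top edge $J_1$ to be $\{y_{n1}\}_{n\in\mathbb N}$ and the family attached to the bottom edge $J_{-1}$ to be $\{y_{n2}\}_{n\in\mathbb N}$, matching the roles of $\omega$- and $\alpha$-limit sets respectively. The base dynamics is a normally rising homeomorphism of $[-1,1]^2$ of the type studied in Section~\ref{sec-3}, whose forward orbits are driven toward $J_1$ and whose backward orbits are driven toward $J_{-1}$; the boundary permeating technique then deforms this base map near the two edges so that, instead of reaching the boundary, the forward orbit of each $w\in W_n$ accumulates exactly on $\{y_{n1}\}$ and its backward orbit accumulates exactly on $\{y_{n2}\}$.

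The substance of the reduction is to check that hypotheses (1) and (2) supply precisely the data that Theorem~\ref{thm:4-7} requires of a family of permeable hanging sets. Condition (1), that $d(y_{n1},J_1)\to 0$ and $d(y_{n2},J_{-1})\to 0$, is exactly the assertion that the singletons \emph{hang} from the respective edges: as $n\to\infty$ the points $y_{n1}$ approach $J_1$ and the points $y_{n2}$ approach $J_{-1}$, so the hanging sets can be attached along sequences converging to the boundary, which is what permits the permeation to be carried out in a shrinking sequence of pairwise disjoint collar regions. Condition (2), the local finiteness of $Y_{\mathbb N}$ under the first-coordinate projection $p$, is the separation hypothesis guaranteeing that only finitely many hanging sets lie in any thin vertical strip $p^{-1}([\beta-\varepsilon_\beta,\beta+\varepsilon_\beta])$; this is what allows the narrow channels used to route the orbits of $W_n$ toward $y_{n1}$ and $y_{n2}$ to be chosen pairwise disjoint and to converge cleanly, so that the deformation assembles into a single well-defined homeomorphism rather than accumulating pathologically at some horizontal position.

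With the hanging sets and their verification in place, I would invoke Theorem~\ref{thm:4-7} to obtain a homeomorphism $\psi$ of $[-1,1]^2$ realizing $\{y_{n1}\}$ as the common $\omega$-limit set of all points of $W_n$ and $\{y_{n2}\}$ as their common $\alpha$-limit set. Because the top and bottom edges enter the setup symmetrically, the theorem can be read as producing a single map controlling both prescriptions at once; equivalently, one builds the $\omega$-side at $J_1$ and obtains the $\alpha$-side at $J_{-1}$ via the symmetry $\alpha(\,\cdot\,,\psi)=\omega(\,\cdot\,,\psi^{-1})$, under which the bottom edge plays for $\psi^{-1}$ the role the top edge plays for $\psi$. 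Zero topological entropy is inherited from the normally rising base map together with the fact that the permeating deformations are supported in disjoint collar neighborhoods of the two edges and keep the map gradient-like, with orbits still crossing the bulk of the square monotonically; since the hanging sets are dendrites, no horseshoe-type complexity is introduced.

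Within this derivation the delicate point is not the dynamics but the bookkeeping: one must confirm that singletons genuinely satisfy every clause in the definition of a permeable hanging set, and that condition (2) matches, clause for clause, the technical separation hypothesis of Theorem~\ref{thm:4-7}. The heart of the difficulty lies one level down, in Theorem~\ref{thm:4-7} itself, where the families of hanging sets near $J_1$ and near $J_{-1}$ must be woven into a single homeomorphism: since the $W_n$ are dense in $[-1,1]^2$, prescribing the forward and backward limit behavior on a dense set while keeping $\psi$ a homeomorphism of zero entropy forces the channels carrying different $W_n$ to be simultaneously disjoint, convergent, and compatible at both edges. Verifying that these requirements can be met together, and that the resulting map is continuous with continuous inverse, is the main obstacle, and it is exactly what the boundary permeating technique of Section~\ref{Sec-4} is designed to overcome.
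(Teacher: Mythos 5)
There is a genuine gap, and it sits exactly where you located the ``bookkeeping'': singletons $\{y_{n1}\}$, $\{y_{n2}\}$ lying in the open square are \emph{not} permeable hanging sets, and no amount of checking will make them so. By Definition \ref{handing set}, a hanging set on $J_s$ must meet $\partial J^2$ in a nonempty connected subset of $\stackrel{\ \circ}{J_s}$; a point of $(-1,1)^2$ is a \emph{floating} set $Y\in\mathbf{Y}$, and Theorem \ref{thm:4-7} cannot be applied to it directly. What the paper actually does is attach to each $y_{n1}$ a \emph{cable} $L_{n1}$ (a straight segment of suitably chosen slope) running up to $J_1$, so that the hanging set fed into Theorem \ref{thm:4-7} is $X_{n1}=\{y_{n1}\}\cup L_{n1}\in\mathbf{X}(\mathbf{Y},1)$, and likewise $X_{n2}=\{y_{n2}\}\cup L_{n2}$ down to $J_{-1}$; one then takes $\mathbb{N}'=\mathbb{N}''=\mathbb{N}$ so that clauses (2) and (4) of Theorem \ref{thm:4-7} yield $\omega_\psi(w)=Y_{n1}=\{y_{n1}\}$ and $\alpha_\psi(w)=Y_{n2}=\{y_{n2}\}$ --- the floating part alone, not the cable. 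Had you instead used clauses (1) and (3), the limit set would be the whole set including the cable, which is not what the theorem claims. Your proposal never mentions cables, so the object you propose to hand to Theorem \ref{thm:4-7} does not satisfy its hypotheses.

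This omission hides the one genuinely nontrivial step of the reduction. Theorem \ref{thm:4-7} requires $W_n\subset \stackrel{\ \circ}{J}{}^{2}-X_{\mathbb{N}}$, so the cables must avoid every point of the countable dense sets $W_n$ (and the other points of $Y_{\mathbb{N}}$). The paper secures this by forming the countable set $Z=\{(w-y)/|w-y|: w\in W_{\mathbb{N}},\, y\in Y_{\mathbb{N}}\}$ of ``forbidden'' directions on $\mathbb{S}^1$ and choosing, for each $\beta\in p(Y_{\mathbb{N}})$, a slope $1/\mu_\beta$ whose direction lies outside $Z$, with $\mu_\beta\le\delta_\beta$ small enough that the resulting hutches $D_{ni}$ are pairwise disjoint (using hypothesis (2)) and have diameters tending to $0$ (using hypothesis (1) and $\mathrm{diam}(D_y)<3|x_y-y|/2$). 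Your reading of hypotheses (1) and (2) as ensuring that the deformation regions shrink and do not accumulate is directionally right, but the argument that the cables can be chosen disjoint from a prescribed countable dense set --- the direction-avoidance trick --- is the actual content of the paper's proof of this theorem, and it is absent from your proposal. (Your entropy remark is also looser than needed: the paper simply observes that $\psi$ restricted to its nonwandering set is the identity, so $h_{top}(\psi)=0$ by Walters' corollary; no ``gradient-like'' structure needs to be invoked.)
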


Theorem \ref{main2} indicates that a square homeomorphism of zero entropy can have very
complex dynamical behaviors if we take the view of limit sets. One may compare this result with
other interesting results around limits sets of triangular maps and spiral maps \cite{BM10,JS01,KM10, KS92}.
The construction process  to prove the Theorem is a mechanism that produces homeomorphisms of all kinds of complex
limit sets, which has been used in dealing with other problems around plane homeomorphisms \cite{MSYZ24-6, MSYZ24}.

\section{Definitions and notations}

Throughout the paper, we use $\mathbb Z$, $\mathbb Z_+$, $\mathbb Z_-$,
and $\mathbb N$ to denote the sets of integers, nonnegative integers,
nonpositive integers, and positive integers, respectively. For any
\;\!$n\in\mathbb{N}$\;\!, \,write \;\!$\mathbb{N}_n=\{1,\cdots ,
n\}$. For\ \ $r\,,\,s\,\in\,\mathbb R$\,,\ \,we use \ $(r,s)$ \ to
denote a point in \,$\mathbb R^{\,2}$\,.\ \ If \ $r\,<\,s$\;,\ \ we
also use \ $(r,s)$\ \ to denote an open interval in\ \,$\mathbb
R$.\,\,\,These will not lead to confusion\,.\,\,\,For example\,,
\,if we write \ $(r,s)\,\in\,X$\,, \ then \ $(r,s)$ \ will be a
point\,; \ if we write \ $t\,\in\,(r,s)$\,,\ \ then \ $(r,s)$ \ will
be a set\,,\ \,and hence is an open interval\,.

\medskip

Let $d$ be the Euclidean metric on $\mathbb R^2$. For
$x\in\mathbb R$, $X, Y\subset \mathbb R^2$, and $r\geq 0$, write
$B(x, r)=\{y\in\mathbb R^2: d(y, x)\leq r\}$, $B(X, r)=\bigcup\{B(x,
r): x\in X\}$, and $d(X, Y)=\inf\{d(x, y): x\in X, y\in Y\}$. Denote
by ${\rm diam}(X)\equiv\sup\{d(y, z):y,\ z\in X\}$ the
{\it {diameter}} of $X$. A space $Y$ is called a
{\it{disc}} (resp. a {\it {circle}}, resp. an {\it
{arc}}) if $Y$ is homeomorphic to the unit disc ${\mathbb
D}\equiv B(0, 1)$ (resp. to the unit circle $\mathbb S^1\equiv
\partial {\mathbb D}$, resp. to the unit interval $[0, 1]$). If $I$
is an arc and $h:I \rightarrow [0, 1]$ is a homeomorphism, then
$\partial I \equiv\{h^{-1}(0), h^{-1}(1)\}$ is called the {\it
{endpoint set}} of $I$; we use $\stackrel{\circ} {I}$ to
denote $I-\partial I$. Notice that we also use $\stackrel{\circ}
{Y}$ to denote the interior ${\rm Int}\ Y$ of $Y$ for any
$Y\subset\mathbb R^2$. For any circle $C$ in $\mathbb{R}^2$, denote
by $\mathrm{Dsc}(C)$ the disc in $\mathbb{R}^2$ such that $\partial
\mathrm{Dsc}(C)=C$.

\medskip

A {\it continuum} is a connected compact metric space and a {\it Peano continuum}
 is a locally connected continuum.  A {\it graph} is a continuum which can be written as the union of finitely
many arcs, any two of which are either disjoint or intersect only in
one or both of their end points. A {\it {tree}} is a graph containing no circle.
A {\it dendrite} is a Peano continuum containing no circle.  Clearly, a tree is a dendrite.

\medskip

Let $X$ be a metric space and $f:X\rightarrow X$ be a homeomorphism.
Let $f^{\;\!0}$ be the identity map of $X$, and let
$f^{\;\!n}=f\circ f^{\;\!n-1}$ be the composition map of $f$ and
$f^{\;\!n-1}$. For $x \in X$, the sets $O(x,
f)\equiv\{f^n(x):n\in\mathbb Z\}$, $O_+(x,
f)\equiv\{f^n(x):n\in\mathbb Z_+\}$, and $O_-(x,
f)\equiv\{f^{-n}(x):n\in\mathbb Z_-\}$ are called the {\it
{orbit}}, {\it {positive orbit}}, and {\it
{negative orbit}} of $x$, respectively. For $x, y\in X$, if
there is a sequence of positive integers $n_1<n_2<\cdots$ such that
$f^{n_i}(x)\rightarrow y$  then we call $y$ an {\it
{$\omega$-limit point}} of $x$. We denote by $\omega(x, f)$ or $\omega_f(x)$
the set of all $\omega$-limit points of $x$ and call it the {\it
{$\omega$-limit set}} of $x$. If $y \in \omega(x, f^{-1})$,
then we call $y$ an {\it {$\alpha$-limit point}} of $x$. We
denote by $\alpha(x, f)$ or $\alpha_f(x)$ the set of all $\alpha$-limit points of $x$
and call it the {\it {$\alpha$-limit set}} of $x$.

\section{Limit sets of normally rising homeomorphisms on the square $J^2$}\label{sec-3}

We always write\ \ $J=\,[\,-1\,,\,1\,]$\;.\ \ \,Define the
homeomorphism \ $f_{01}\,:\,J\,\to\;J$\ \ by\ , \ \,for any \
$s\,\in\,J$,
$$
f_{01}(s)\,=\;
   \left\{\begin{array}{ll}(s+1)/2\,, & \mbox{\ \ \ \ \ if \ }\;0\,\le\,s\,\le1\;; \\
                        \!\;\;s+1/2\,, & \mbox{\ \ \ \ \ if \ \ }-1/2\,\le\,s\le\,0\;;  \\
                             \;2s+1\,, & \mbox{\ \ \ \ \ if \ }-1\,\le\,s\le\,-1/2\ .
\end{array}\right.
$$
Let the homeomorphism \ $f_{02}\,:\,J^{\,2}\,\to\;J^{\,2}$\ \ be defined by\
, \ \,for any \ $(r,s)\,\in\,J^{\,2}$\ ,
$$
f_{02}(r,s)\ =\ \big(\,r\,,\,f_{01}(s)\,\big)\ .
$$

For any compact connected manifold $M$,\ \ denote by \,$\partial M$
\,the boundary\,, \,and by $\stackrel{\ \circ}{M}$ \,the interior of
\,$M$. \ Specially, \ we have \ $\partial J\,
=\,\{\,-1\,,\,1\:\!\}$\,, \ and \ $\stackrel{\
\circ}{J}\,=\,(\,-1\,,1\:\!)$\,. \ Note that \ $\partial J^{\,2}$ \
is \ $\partial(J^{\,2})$\,,\ \ not \ $(\partial J)^{\,2}$\,.

\begin{defin}\label{normal rising} \ \ For any\ \ $s\,\in\,J$\,,\ \ \,write \ $J_s\,=
\,J\times\{s\}$\;. \ A homeomorphism \ $f:J^{\,2}\to\,J^{\,2}$ \ is
said to be \,{\it normally rising} \ if
$$
f\,|\;\partial J^{\,2}\,=\,f_{02}\,|\;\partial J^{\,2}\,,
\hspace{10mm}    \mbox{and} \hspace{8mm} f(J_s)\;=\,f_{02}(J_s)
\hspace{5mm} \mbox{for \ any}\ \ \,s\,\in\,J.
$$
\end{defin}

For $s\in J$, let $J_s=J\times \{s\}$ and $\mathcal C_s$ be the
collection of all nonempty connected closed subsets of $J_s$. A map
$\phi:J\rightarrow \mathcal C_s$ is {\it increasing} if the
abscissae and ordinates of the endpoints of $\phi(r)$ are increasing
functions of $r$ and is {\it endpoint preserving} if $\phi(-1)=(-1,
s)$ and $\phi(1)=(1, s)$. If $f$ is a normally rising homeomorphism
on $J^2$ and $s\in J$, then $\omega_{sf}(r)\equiv\omega_f(r, s)$
defines an increasing function $\omega_{sf}$ from $J$ to $\mathcal
C_1$. Similarly, we also have an increasing function $\alpha_{sf}$
from $J$ to $\mathcal C_{-1}$.
\medskip

Write $\mathcal A=\mathcal C_1$ and $\mathcal A'=\mathcal C_{-1}$. In
\cite{MSYZ24}, we obtained the following result.

\begin{thm}\label{thm:3-2}
Let \ \,$\mathbb N\,'$ and \ \,$\mathbb N\,''$\ \,be \,two
\,nonempty\, subsets\,\ of \ \,$\mathbb N\,$,\ \,and \;let
\;\,${\mathcal V}\,=\,\{\,V_n\,:\,n\, \in\,\mathbb N\,'\,\}$\ \ and
\ \ ${\mathcal W}\,=\,\{\,W_j\,:\,j\,\in\,\mathbb N\,''\,\}$ \ be
\,two\, families \,of \;pairwise\ \,disjoint \,nonempty\
\,connected\ \,subsets\ \,of \;the\, semi-open interval \ $(0,
1/2]$\;.\ \ For \,each \ $n\in\mathbb N\,'$\ \ and \,each \
$j\in\mathbb N\,''$\,,\ \ let \
\,$\omega_n\,:\,J\,\to\;\mathcal{A}$\ \; and \
\,${\alpha}_j\,:\,J\, \to \;\mathcal{A}'$ \ be \,given
\,increasing \,and\,\ endpoint \,preserving \,maps. \,Then \,there
\,exists \,a \,normally \,rising \,homeomorphism \
$f\,:\,J^{\,2}\,\to\;J^{\,2}$\ \,such\ \,that \
${\omega}_{sf\,}=\ {\omega}_n$\ \ for
\,any \ $n \in\mathbb N\,'$\ \ and \,any \ $s\in V_n$\ , \ and \
\,${\alpha}_{tf\,}=\ {\alpha}_j$ \ for
\,any \ $j \in\mathbb N\,''$\ \ and \,any \ $t\in W_j$.
\end{thm}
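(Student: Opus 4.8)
The plan is to construct $f$ through its action on the horizontal levels. Since a normally rising $f$ must send $J_s$ onto $J_{f_{01}(s)}$ and must coincide with $f_{02}$ on $\partial J^{2}$, every such $f$ has the form $f(r,s)=(h_s(r),f_{01}(s))$, where each $h_s\colon J\to J$ is an increasing homeomorphism with $h_s(\pm1)=\pm1$, where $h_{-1}=h_1=\operatorname{id}$, and where $s\mapsto h_s$ is continuous in the uniform metric; conversely, any such family $\{h_s\}_{s\in J}$ yields a normally rising homeomorphism. Thus the whole problem reduces to prescribing the family $\{h_s\}$.

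First I would use the fundamental-domain structure of $f_{01}$. As $f_{01}$ is increasing with $f_{01}(s)>s$ on $(-1,1)$ and $f_{01}(0)=1/2$, the semi-open interval $(0,1/2]$ is a fundamental domain for $f_{01}$ acting on $(-1,1)$: its forward images $f_{01}^{k}((0,1/2])$, $k\ge 1$, tile $(1/2,1)$, and its backward images tile $(-1,0]$. For $(r,s)$ with $s\in(0,1/2]$ the forward orbit climbs through the upper slabs while the backward orbit descends through the lower ones, so $\omega(r,s)$ is governed solely by the $h_\ell$ with $\ell>0$ and $\alpha(r,s)$ solely by those with $\ell\le0$. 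This decouples the $\omega$-prescription on the levels $V_n$ from the $\alpha$-prescription on the levels $W_j$, so it suffices to treat them independently; since $\alpha_f=\omega_{f^{-1}}$, I would carry out only the $\omega$-construction.

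Writing $\Phi_k=h_{f_{01}^{k-1}(s)}\circ\cdots\circ h_s$, one has $f^{k}(r,s)=(\Phi_k(r),f_{01}^{k}(s))$ and $f_{01}^{k}(s)\to 1$, so $\omega_{sf}(r)$ is exactly $\{(u,1): u \text{ an accumulation point of } (\Phi_k(r))_{k\ge1}\}$. If $\omega_n(r)=[\lambda_n(r),\rho_n(r)]\times\{1\}$ with $\lambda_n\le\rho_n$ nondecreasing and $\lambda_n(-1)=\rho_n(-1)=-1$, $\lambda_n(1)=\rho_n(1)=1$ (the meaning of ``increasing and endpoint preserving''), the task becomes: choose the per-step maps so that for every $r$ the sequence $\Phi_k(r)$ accumulates on exactly $[\lambda_n(r),\rho_n(r)]$. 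The mechanism is a sweep. I would approximate the nondecreasing endpoint-fixing maps $\lambda_n,\rho_n$ from inside by strictly increasing homeomorphisms $L^{(n)}_m\downarrow\lambda_n$ and $R^{(n)}_m\uparrow\rho_n$ fixing $\pm1$, and, as $k$ grows, let the per-step maps alternately drive every $r$ toward $L^{(n)}_m(r)$ and toward $R^{(n)}_m(r)$ with $m$ refined along the way. Each $\Phi_k$ is a genuine homeomorphism, so nothing collapses at finite time; yet in the limit the orbit of each $r$ oscillates across $[\lambda_n(r),\rho_n(r)]$ with amplitude tending to the full interval and mesh tending to $0$, so its accumulation set is precisely this closed interval (which also supplies the required connectedness of $\omega_{sf}(r)$). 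To obtain the same $\omega_n$ at every $s\in V_n$ at once, I would install the $k$-th sweeping map on the sub-slab $f_{01}^{k}(V_n)$ and interpolate back to $\operatorname{id}$ on the remainder of the $k$-th slab; as the $V_n$ are pairwise disjoint subintervals of $(0,1/2]$, the sets $f_{01}^{k}(V_n)$ are disjoint and the prescriptions can be fitted together into one continuous family $\{h_s\}_{s>0}$ tapering to $\operatorname{id}$ as $s\to1$. The $\alpha$-construction on the lower slabs for the $W_j$ is identical, and the two families, supported on disjoint sets of levels, combine into a single $\{h_s\}_{s\in J}$.

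The main obstacle is the sweep itself: realizing the \emph{exact} interval $[\lambda_n(r),\rho_n(r)]$ as the accumulation set for \emph{every} $r$ simultaneously, using one sequence of honest homeomorphisms, under only the monotonicity and endpoint hypotheses. The tension is that the natural collapsing targets $\lambda_n,\rho_n$ need not be injective and so cannot be attained by homeomorphisms in finite time; one must instead secure the upper bound (the tail of the orbit does not leave $[\lambda_n(r),\rho_n(r)]$) and the lower bound (the tail is dense there) for all $r$ uniformly enough that the interpolation across $r$ and the continuity in $s$ both survive. Interleaving the refinement of $L^{(n)}_m,R^{(n)}_m$ with the slab index $k$, a diagonal argument, is where the real work lies; granting the sweep, the verifications that $f$ is a homeomorphism, is normally rising, and realizes the prescribed $\omega_{sf}$ and $\alpha_{tf}$ are routine.
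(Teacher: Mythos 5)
First, a point of comparison: this paper does not actually prove Theorem \ref{thm:3-2} --- it is quoted from the companion paper \cite{MSYZ24} --- so there is no internal proof to measure you against, and I can only judge the proposal on its own terms. Your skeleton is sound and is surely the natural one: the reduction to fiber maps $f(r,s)=(h_s(r),f_{01}(s))$ with $h_{\pm1}=\mathrm{id}$ and $s\mapsto h_s$ uniformly continuous, the use of $(0,1/2]$ as a fundamental domain of $f_{01}$, the decoupling of $\omega$-data (levels $\ell>0$) from $\alpha$-data (levels $\ell\le 0$) --- which correctly handles the possibility $V_n\cap W_j\neq\emptyset$, needed for Corollary \ref{thm:3-3} --- and the identification of $\omega_{sf}(r)$ with the accumulation set of the compositions $\Phi_k(r)$. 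However, even before the main difficulty, one stated step fails: you cannot in general choose homeomorphisms $L^{(n)}_m\downarrow\lambda_n$ and $R^{(n)}_m\uparrow\rho_n$. A pointwise decreasing limit of continuous functions is upper semicontinuous, so $L^{(n)}_m\downarrow\lambda_n$ requires $\lambda_n$ to be right-continuous (and $R^{(n)}_m\uparrow\rho_n$ requires $\rho_n$ left-continuous), whereas the hypotheses allow arbitrary nondecreasing endpoint functions, e.g.\ $\lambda_n=\rho_n\equiv-1/2$ on $(-1,0]$ and $\equiv 1/2$ on $(0,1)$. This is repairable (the sweep only needs two-sided pointwise convergence, and every monotone endpoint-fixing function is a pointwise limit of homeomorphisms), but as written the step is false.

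The genuine gap is the ``fitting together'' step, which you assert in one clause and which is where the theorem actually lives. Pairwise disjoint nonempty connected subsets of $(0,1/2]$ may \emph{touch} (e.g.\ $V_1=(0,1/4)$, $V_2=[1/4,1/2]$) or \emph{accumulate} (e.g.\ $V_n=\{2^{-n-1}\}$), so there is no ``remainder of the $k$-th slab'' through which to interpolate back to $\mathrm{id}$. Your synchronized scheme needs $h_\ell$ to be (locally) constant in $\ell$ on each $f_{01}^k(V_n)$, since every $s\in V_n$ must receive the same composition; continuity of $\ell\mapsto h_\ell$ at the common boundary level $f_{01}^k(1/4)$ then forces the $k$-th sweep maps of $V_1$ and $V_2$ to coincide for every $k$, hence forces $\omega_1=\omega_2$ --- a contradiction when the prescriptions differ. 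The known repair (delay/taper the activation of prescription $n$ near bad boundary or accumulation points) is not routine, because it breaks your formula for the accumulation set: if an orbit carries a nontrivial ``pre-activation'' composition $\Phi_{k_0}$ and the ideal sweep maps $\Phi^{\mathrm{des}}_{k+1}\circ(\Phi^{\mathrm{des}}_{k})^{-1}$ are installed only afterwards, then $\Phi_k=\Phi^{\mathrm{des}}_k\circ(\Phi^{\mathrm{des}}_{k_0})^{-1}\circ\Phi_{k_0}$, and the realized limit set is $[\lambda_n(r''),\rho_n(r'')]$ at the shifted argument $r''=(\Phi^{\mathrm{des}}_{k_0})^{-1}\Phi_{k_0}(r)$, not at $r$. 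So the tail must also compensate the accumulated error, continuously in $s$, while keeping the per-step displacements tending to $0$ (this is needed anyway so that $h_\ell\to\mathrm{id}$ as $\ell\to 1$, i.e.\ continuity of $f$ on $J_1$). Designing this compensation so that \emph{every} $s\in V_n$, however close to a bad point, still has accumulation set exactly $[\lambda_n(r),\rho_n(r)]$ for every $r$ simultaneously is precisely the content of the theorem; you explicitly defer it (``where the real work lies''), so what you have is a plausible strategy rather than a proof, and the specific synchronization you do describe fails for admissible configurations of $\{V_n\}$.
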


Specially, we have

\begin{cor}\label{thm:3-3}
Let \ \,$\mathbb N^{*}$ \,be \, a \,nonempty\, subset\,\ of \
\,$\mathbb N\,$. Then for any given maps $\omega: \mathbb{N}^{*}
\rightarrow \mathcal{A}$ and $\alpha: \mathbb{N}^{*} \rightarrow
\mathcal{A}'$, there exist a family $\{S_n: n \in \mathbb{N}^*\}$
of pairwise disjoint countable subsets of $\stackrel{\ \circ}{J}$
and a normally rising homeomorphism $f: J^2 \rightarrow J^2$ such
that, for any $n \in \mathbb{N}^{*}$, $S_n$ is dense in $J$, and
$\omega_f(r,s)=\omega(n)$ and $\alpha_f(r,s)=\alpha(n)$ for any
$(r,s) \in \stackrel{\ \circ}{J} \times S_n$.
\end{cor}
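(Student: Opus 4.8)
The plan is to derive the corollary from Theorem \ref{thm:3-2} by exploiting the dynamics of the vertical factor $f_{01}$. The first observation is that, although the corollary prescribes a \emph{single} limit set $\omega(n)\in\mathcal A$ independent of the first coordinate $r$, this can still be encoded as a legitimate target for Theorem \ref{thm:3-2}. Indeed, for each $n\in\mathbb N^{*}$ define $\omega_n:J\to\mathcal A$ by $\omega_n(-1)=(-1,1)$, $\omega_n(1)=(1,1)$, and $\omega_n(r)=\omega(n)$ for every $r\in(-1,1)$; define $\alpha_n:J\to\mathcal A'$ analogously with value $\alpha(n)$ on $(-1,1)$ and the corners $(-1,-1)$, $(1,-1)$ at the endpoints. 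Each such map is increasing and endpoint preserving: the ordinates are constant, while the endpoint abscissae jump from $-1$ up to the abscissae of $\omega(n)$ (resp. $\alpha(n)$) on $(-1,1)$ and then up to $1$, which is non-decreasing in $r$. Thus these constant-on-interior maps are admissible inputs for Theorem \ref{thm:3-2}.

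The second, crucial, observation is that $(0,1/2]$ is a fundamental domain for $f_{01}$ acting on $(-1,1)$: since $f_{01}(0)=1/2$ while $-1$ and $1$ are the repelling and attracting fixed points, every $f_{01}$-orbit in $(-1,1)$ meets $(0,1/2]$ in exactly one point, and $\bigcup_{k\in\mathbb Z}f_{01}^{\,k}\big((0,1/2]\big)=(-1,1)$. Hence a prescription on $(0,1/2]$, transported along orbits, will cover all of $(-1,1)$. Concretely, I would first choose pairwise disjoint countable sets $\{T_n:n\in\mathbb N^{*}\}$, each dense in $(0,1/2]$ (for instance, partition $\mathbb Q\cap(0,1/2]$ into dense pieces). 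Enumerate the countable set $P=\bigcup_n T_n$ as $\{p_m:m\in\mathbb N'\}$, and let $n(m)\in\mathbb N^{*}$ be the unique index with $p_m\in T_{n(m)}$. Apply Theorem \ref{thm:3-2} with $\mathbb N''=\mathbb N'$, with the singletons $V_m=W_m=\{p_m\}$ (pairwise disjoint since the $p_m$ are distinct), and with targets $\omega_{n(m)}$ and $\alpha_{n(m)}$. This yields a normally rising homeomorphism $f$ with $\omega_{p_m f}=\omega_{n(m)}$ and $\alpha_{p_m f}=\alpha_{n(m)}$ for every $m$. Finally set $S_n=\bigcup_{k\in\mathbb Z}f_{01}^{\,k}(T_n)\subset(-1,1)$.

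It then remains to check the three requirements on $\{S_n\}$ together with the limit-set identities. The $S_n$ are pairwise disjoint, because distinct $f_{01}$-orbits are disjoint and the $T_n$ lie in a single fundamental domain; each $S_n$ is countable; and, since $T_n$ is dense in $(0,1/2]$ and each $f_{01}^{\,k}$ is a homeomorphism onto its image piece, $S_n$ is dense in $\bigcup_k f_{01}^{\,k}((0,1/2])=(-1,1)$, hence dense in $J$. For the limit sets, fix $n$, $s\in S_n$ and $r\in(-1,1)$, and write $s=f_{01}^{\,k}(p)$ with $p\in T_n$. Because $f$ is normally rising, $f^{-k}(J_s)=J_p$; and since $f$ agrees with $f_{02}$ on $\partial J^2$, the map $f^{-k}|J_s:J_s\to J_p$ sends endpoints to endpoints, so it carries $(-1,1)\times\{s\}$ into $(-1,1)\times\{p\}$, giving $f^{-k}(r,s)=(r^{\ast},p)$ with $r^{\ast}\in(-1,1)$. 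Using the orbit-invariance $\omega_f(y)=\omega_f(f^{-k}(y))$ of $\omega$-limit sets, and noting $n(m)=n$ where $p=p_m$,
\[
\omega_f(r,s)=\omega_f(r^{\ast},p)=\omega_{pf}(r^{\ast})=\omega_{n}(r^{\ast})=\omega(n),
\]
the last equality holding because $r^{\ast}\in(-1,1)$ and $\omega_n$ is constant there. The identical computation with $f^{-1}$ yields $\alpha_f(r,s)=\alpha_n(r^{\ast})=\alpha(n)$, establishing the conclusion for all $(r,s)\in(-1,1)\times S_n$.

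The main difficulty is conceptual rather than computational: one must recognize that a limit set which is \emph{constant in} $r$ is nevertheless realizable through Theorem \ref{thm:3-2} (via a target that is constant on the open interior and jumps to the corners at $\pm1$), and that \emph{constancy on the interior is precisely the feature preserved} under the horizontal reparametrization induced by moving along an $f_{01}$-orbit. This is exactly what allows a prescription made on the single fundamental domain $(0,1/2]$ to propagate unchanged to the dense, orbit-saturated sets $S_n$, reconciling the fact that Theorem \ref{thm:3-2} only controls behaviour on $(0,1/2]$ with the requirement that each $S_n$ be dense in all of $J$. The only point needing care in the write-up is that Theorem \ref{thm:3-2} is invoked with $V_m=W_m$, which is permitted since the disjointness hypothesis is imposed within each of the families $\mathcal V$ and $\mathcal W$ separately.
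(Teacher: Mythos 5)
Your proposal is correct and takes essentially the same route as the paper: prescribe target maps that are constant (equal to $\omega(n)$, resp.\ $\alpha(n)$) on $\stackrel{\ \circ}{J}$ and endpoint preserving, impose them via Theorem \ref{thm:3-2} on pairwise disjoint countable dense subsets of the fundamental domain $(0,1/2]$, and then saturate those subsets along $f_{01}$-orbits to obtain the dense sets $S_n$. The only differences are points of rigor in your favor: you invoke Theorem \ref{thm:3-2} with singleton sets $\{p_m\}$, which literally satisfies its connectedness hypothesis (the paper passes the non-connected countable dense sets $S_{n1}$ themselves as the family members), and you explicitly verify, using orbit-invariance of limit sets together with the fact that a normally rising $f$ maps each fiber $J_s$ onto $J_{f_{01}(s)}$ sending endpoints to endpoints, that the limit-set identities propagate from $\stackrel{\ \circ}{J}\times S_{n1}$ to $\stackrel{\ \circ}{J}\times S_n$ --- a step the paper asserts without proof.
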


\begin{proof}
We may consider only the case that $\mathbb{N}^{*}=\mathbb{N}$. Take
a family $\{S_{n1}: n \in \mathbb{N}\}$ of pairwise disjoint
countable subsets of the semi-open interval $I_1=(0, 1/2]$ such that
every $S_{n1}$ is dense in $I_1$. Let $\mathbb{N}'=\mathbb{N}''=\mathbb{N}$,
$\mathcal{V}=\mathcal{W}=\{S_{n1}: n \in \mathbb{N}\}$, and for any $n \in
\mathbb{N}$ and any $s \in S_{n1}$, let the maps $\omega_s: J
\rightarrow \mathcal{A}$ and $\alpha_s: J \rightarrow \mathcal{A}'$
be defined by $\omega_s(r)=\omega(n)$ and $\alpha_s(r)=\alpha(n)$
for any $r \in \stackrel{\ \circ}{J}$. Then from Theorem
\ref{thm:3-2},  there exists a normally rising
homeomorphism $f: J^2 \rightarrow J^2$ such that
$$\omega_f(r,s)=\omega_s(r)=\omega(n) \mbox{\hspace{5mm} and \hspace{5mm}}
\alpha_f(r,s)=\alpha_s(r)=\alpha(n)$$ for any $n \in \mathbb{N}$ and
any $(r,s) \in \stackrel{\ \circ}{J} \times S_{n1}$. Let
$$S_n=\bigcup\{f_{01}^k(S_{n1}): k \in \mathbb{Z}\}.$$
Then $\{S_n: n \in \mathbb{N}\}$ is a family of pairwise disjoint
countable subsets of $\stackrel{\ \circ}{J}$, each $S_n$ is dense in
$J$, and $\omega_f(r,s)=\omega(n)$ and $\alpha_f(r,s)=\alpha(n)$ for
any $n \in \mathbb{N}$ and any $(r,s) \in \stackrel{\ \circ}{J}
\times S_n$. Corollary \ref{thm:3-3} is proved.
\end{proof}

Let $(X, d)$ and $(Y, \rho)$ be two metric spaces, and let $\lambda \geq
1$ be given. A bijection $h: X \rightarrow Y$ is called a {\it
$\lambda$-homeomorphism} if $d(x,y)/\lambda \leq \rho(h(x), h(y))
\leq \lambda d(x,y)$ for any $x, y \in X$.

\begin{lem} \label{lem:3-4}
Let $\{V_n: n \in \mathbb{N}\}$ be a family of pairwise disjoint
countable subset of $\stackrel{\ \circ}{J}\!^2$, and $\{W_n: n \in
\mathbb{N}\}$ be a family of pairwise disjoint subsets of
$\stackrel{\ \circ}{J}\!^2$. Suppose that, for each $n \in
\mathbb{N}$, both $V_n$ and $W_n$ are dense in $J^2$. Then for any
$\lambda>1$ and any $\varepsilon>0$, there exists a
$\lambda$-homeomorphism $h: J^2 \rightarrow J^2$ such that
$h|\partial J^2=id$, $d(h(x), x)<\varepsilon$ for any $x \in J^2$,
and $h(V_n) \subset W_n$ for any $n \in \mathbb{N}$.

\begin{proof}
Let $\mathbf{V}=\bigcup\{V_n: n \in \mathbb{N}\}$. Then $\mathbf{V}$
is also countable. Take a bijection $\chi: \mathbb{N} \rightarrow
\mathbf{V}$ and let $x_k=\chi(k)$ for any $k \in \mathbb{N}$. Define
a map $\gamma: \mathbf{V} \rightarrow \mathbb{N}$ by $\gamma(x)=n$
for any $x \in V_n$ and for any $n \in \mathbb{N}$. Take
$\lambda_1>\lambda_2>\lambda_3>\cdots>1$ and
$\varepsilon_1>\varepsilon_2>\varepsilon_3>\cdots>0$ such that
$\Pi_{j=1}^\infty \lambda_j<\lambda$ and
$\sum_{j=1}^{\infty}\varepsilon_j<\varepsilon$. For any $k \in
\mathbb{N}$, write
$$\mu_k=\prod_{j=1}^k \lambda_j \mbox{\hspace{8mm} and \hspace{8mm}}
\delta_k=\sum_{j=1}^k \varepsilon_j.$$

{\bf Claim 1.}\ For any $k \in \mathbb{N}$, there exists a
$\mu_k$-homeomorphism $h_k: J^2 \rightarrow J^2$ such that

\medskip

(C.k.1)\ $h_k|\partial J^2=id$;

\medskip

(C.k.2)\ $d(h_k(x),x)<\delta_k$ for any $x \in J^2$;

\medskip

(C.k.3)\ $h_k(x_k) \in W_{\gamma(x_k)}$ and $h_k(x_j)=h_j(x_j)$ for
$k>j \geq 1$.

\medskip

{\bf Proof of Claim 1.}\; Let $\tau_1=d(x_1, \partial J^2)$. Take a
point $y_1 \in W_{\gamma(x_1)}$ such that
$$d(y_1, x_1)<\min\{\varepsilon_1, (\lambda_1-1)\tau_1/\lambda_1\}.$$
Clearly, there exists a $\mu_1$ ($=\lambda_1$)-homeomorphism $h_1:
J^2 \rightarrow J^2$ such that $h_1(x_1)=y_1$,
$h_1|\big(J^2-\mathrm{Int}\;B(x_1, \tau_1)\big)=id$, and
$d(h_1(x), x)<\varepsilon_1=\delta_1$ for any $x \in J^2$.

\medskip

We now assume that, for some $m>1$,  there
exist $\mu_k$-homeomorphisms $h_k: J^2 \rightarrow J^2$ ($k=1, 2,
\cdots, m-1$) such that, for $m>k\geq
1$, the conditions (C.k.1)--(C.k.3) hold. Let $y_k=h_{m-1}(x_k)=h_k(x_k)$ for $k \in
\mathbb{N}_{m-1}$, let $z_m=h_{m-1}(x_m)$, and let
$$\tau_m=d\big(z_m, \partial J^2 \cup \{y_k: k \in \mathbb{N}_{m-1}\}\big).$$
Then $\tau_m>0$. Take a point $y_m \in W_{\gamma(x_m)}$ such that
$$d(y_m, z_m)<\min\{\varepsilon_m, (\lambda_m-1)\tau_m/\lambda_m\}.$$
Clearly, there exists a $\lambda_m$-homeomorphism $g_m: J^2
\rightarrow J^2$ such that $g_m(z_m)=y_m$,
$g_m|\big(J^2-\mathrm{Int}\; B^2(z_m, \tau_m)\big)=id$, and
$d(g_m(x),x)<\varepsilon_m$ for any $x \in J^2$. Take
$$h_m=g_mh_{m-1}: J^2 \rightarrow J^2.$$
Then $h_m$ is a $\mu_m$\ ($=\lambda_m\mu_{m-1}$)-homeomorphism, and
the conditions (C.k.1)--(C.k.3) for $k=m$ also hold. By induction,
Claim 1 is proved.

\medskip

The sequence of homeomorphisms $h_1, h_2, h_3, \cdots$ is uniformly
convergent. Let $$h=\lim_{k \rightarrow \infty}h_k: J^2 \rightarrow
J^2.$$Then $h$ satisfies the conditions in Lemma \ref{lem:3-4}, and
the proof is completed.
\end{proof}
\end{lem}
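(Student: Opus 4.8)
The plan is to realize $h$ as the uniform limit of an infinite composition of compactly supported perturbations, each of which nudges a single point of $\mathbf{V}=\bigcup_{n}V_n$ into its prescribed target set while freezing all points already placed; this is a controlled \emph{point-pushing} (infinite swindle) argument, and the bi-Lipschitz bookkeeping is what keeps the limit a homeomorphism.

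First I would record the combinatorial data. Since $\mathbf{V}$ is a countable union of countable sets it is countable, so I enumerate it as $x_1,x_2,\dots$ and define $\gamma(x)=n$ whenever $x\in V_n$, so that placing each $x_k$ into $W_{\gamma(x_k)}$ will yield $h(V_n)\subset W_n$ for every $n$. To pre-allocate the allowed error I fix $1<\dots<\lambda_2<\lambda_1$ with $\prod_{j}\lambda_j<\lambda$ and $\varepsilon_1>\varepsilon_2>\dots>0$ with $\sum_{j}\varepsilon_j<\varepsilon$, and put $\mu_k=\prod_{j\le k}\lambda_j$ and $\delta_k=\sum_{j\le k}\varepsilon_j$.

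The heart of the argument is an induction producing $\mu_k$-homeomorphisms $h_k$ with $h_k|\partial J^2=\mathrm{id}$, $d(h_k(x),x)<\delta_k$, $h_k(x_k)\in W_{\gamma(x_k)}$, and $h_k(x_j)=h_j(x_j)$ for $j<k$. At step $k$ I let $z_k=h_{k-1}(x_k)$ be the current location of the $k$-th point and set $\tau_k$ equal to its distance from the boundary together with the finitely many already-placed images $\{h_j(x_j):j<k\}$; since $z_k$ lies in the open square, $\tau_k>0$. Density of $W_{\gamma(x_k)}$ then supplies a target $y_k\in W_{\gamma(x_k)}$ with $d(y_k,z_k)$ smaller than both $\varepsilon_k$ and $(\lambda_k-1)\tau_k/\lambda_k$, and a radial \emph{bump} $\lambda_k$-homeomorphism $g_k$ supported in $B(z_k,\tau_k)$ carries $z_k$ to $y_k$ and equals the identity outside that ball. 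Setting $h_k=g_k h_{k-1}$ preserves the product and sum bounds, and because $B(z_k,\tau_k)$ avoids every earlier image the previously placed points stay \emph{frozen}; this is exactly what makes the construction coherent.

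The step I expect to be the genuine obstacle is verifying that the limit $h=\lim_k h_k$ is a $\lambda$-homeomorphism and not merely a continuous map. Uniform convergence is automatic from $\sum_j\varepsilon_j<\infty$, and passing to the limit in the two inequalities defining a $\mu_k$-homeomorphism (using $\mu_k\to\prod_j\lambda_j<\lambda$) shows $h$ is bi-Lipschitz with constant $\lambda$, hence injective, fixes $\partial J^2$, and satisfies $d(h(x),x)\le\sum_j\varepsilon_j<\varepsilon$. To get surjectivity and continuity of the inverse I would observe that the inverses $h_k^{-1}$ are uniformly bi-Lipschitz, hence equicontinuous and uniformly bounded, so by the Arzel\`a--Ascoli theorem a subsequence converges uniformly to a map that must be the two-sided inverse of $h$; therefore $h$ is a homeomorphism with all the required properties.
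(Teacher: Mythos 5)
Your proposal is correct and follows essentially the same route as the paper's proof: the same enumeration of $\mathbf{V}$, the same inductive composition $h_k=g_kh_{k-1}$ of compactly supported bi-Lipschitz bumps with radius $\tau_k$ chosen to freeze the already-placed images, and the same uniform limit with the $\mu_k$, $\delta_k$ bookkeeping. The only difference is that you spell out the final verification (bi-Lipschitz bounds passing to the limit, and Arzel\`a--Ascoli applied to the $h_k^{-1}$ to get a two-sided inverse), which the paper leaves implicit with ``Then $h$ satisfies the conditions''---a worthwhile addition, and your argument there is sound.
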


From Corollary \ref{thm:3-3} and Lemma \ref{lem:3-4}, we get

\begin{thm} \label{thm:3-5}
Let $\{V_n: n \in \mathbb{N}\}$ be a given family of pairwise
disjoint countable subsets of $\stackrel{\ \circ}{J}$$^{\;2}$, in
which each $V_n$ is dense in $J^2$. Then for any given maps $\omega:
\mathbb{N} \rightarrow \mathcal{A}$ and $\alpha: \mathbb{N}
\rightarrow \mathcal{A}'$, there exists a homeomorphism $\varphi:
J^2 \rightarrow J^2$ such that, for any $n \in \mathbb{N}$ and any
$x \in V_n$, one has $\omega_{\varphi}(x)=\omega(n)$ and
$\alpha_{\varphi}(x)=\alpha(n)$.
\end{thm}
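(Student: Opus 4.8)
The plan is to combine Corollary \ref{thm:3-3} and Lemma \ref{lem:3-4} through a conjugation that fixes $\partial J^2$ pointwise. First I would apply Corollary \ref{thm:3-3} with $\mathbb N^{*}=\mathbb N$ to the prescribed maps $\omega$ and $\alpha$, obtaining a normally rising homeomorphism $f:J^2\to J^2$ together with a family $\{S_n:n\in\mathbb N\}$ of pairwise disjoint countable subsets of $\stackrel{\circ}{J}$, each dense in $J$, such that $\omega_f(r,s)=\omega(n)$ and $\alpha_f(r,s)=\alpha(n)$ for every $(r,s)\in\stackrel{\circ}{J}\times S_n$. Set $W_n=\stackrel{\circ}{J}\times S_n$. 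Since the $S_n$ are pairwise disjoint and each is dense in $J$, the sets $W_n$ are pairwise disjoint subsets of $\stackrel{\circ}{J}{}^2$, and each $W_n$ is dense in $J^2$. The point is that $f$ already realizes the desired limit sets, but only along the dense collection of horizontal segments $W_n$, whereas we must realize them on the given sets $V_n$.

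Next I would transport the $V_n$ into the $W_n$. Applying Lemma \ref{lem:3-4} (say with $\lambda=2$ and $\varepsilon=1$; the metric estimates play no role here) to the families $\{V_n\}$ and $\{W_n\}$ yields a homeomorphism $h:J^2\to J^2$ with $h|\partial J^2=\mathrm{id}$ and $h(V_n)\subset W_n$ for every $n$. I then define $\varphi=h^{-1}\circ f\circ h$.

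Finally I would verify the limit sets by the standard conjugacy formula. Since $\varphi^k=h^{-1}f^kh$ for all $k\in\mathbb Z$ and $h$ is a homeomorphism of the compact space $J^2$, one has $\omega_\varphi(x)=h^{-1}\big(\omega_f(h(x))\big)$ and $\alpha_\varphi(x)=h^{-1}\big(\alpha_f(h(x))\big)$ for every $x\in J^2$. For $x\in V_n$ we have $h(x)\in W_n$, so $\omega_f(h(x))=\omega(n)$ and $\alpha_f(h(x))=\alpha(n)$. The key observation that makes the equality exact, rather than merely up to a homeomorphism, is that $\omega(n)\in\mathcal A=\mathcal C_1$ lies in $J_1\subset\partial J^2$ and $\alpha(n)\in\mathcal A'=\mathcal C_{-1}$ lies in $J_{-1}\subset\partial J^2$, while $h^{-1}$ restricts to the identity on $\partial J^2$. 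Hence $\omega_\varphi(x)=h^{-1}(\omega(n))=\omega(n)$ and $\alpha_\varphi(x)=h^{-1}(\alpha(n))=\alpha(n)$, as required.

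There is no genuine obstacle once the two cited results are in hand; conceptually the one step that must not be overlooked is exactly this last one, namely that all prescribed limit sets sit on the top and bottom edges $J_1$ and $J_{-1}$, which $h$ fixes, so the conjugation preserves them on the nose. Were the target limit sets allowed interior points, $h^{-1}$ would distort them and the argument would break down; it is precisely the structure of $\mathcal A$ and $\mathcal A'$ as families of subsets of $\partial J^2$ that makes the boundary-fixing homeomorphism from Lemma \ref{lem:3-4} the right tool.
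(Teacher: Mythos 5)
Your proposal is correct and follows essentially the same route as the paper: apply Corollary \ref{thm:3-3} to get $f$ and the dense sets $W_n=\stackrel{\ \circ}{J}\times S_n$, use Lemma \ref{lem:3-4} to obtain a boundary-fixing $h$ with $h(V_n)\subset W_n$, and take $\varphi=h^{-1}fh$. The verification you spell out --- that the conjugation preserves the prescribed limit sets exactly because they lie in $J_1\cup J_{-1}$ where $h$ is the identity --- is precisely what the paper leaves implicit.
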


\begin{proof}
Let $\{S_n: n \in \mathbb{N}\}$ and $f: J^2 \rightarrow J^2$ be the
same as in Corollary \ref{thm:3-3}. For any $n \in \mathbb{N}$,
write $W_n=\stackrel{\ \circ}{J} \times S_n$. Then $W_n$ is an
uncountable subsets of $\stackrel{\ \circ}{J}$$^{\;2}$, which is
dense in $J^2$. Let $h: J^2 \rightarrow J^2$ be the same as in Lemma
\ref{lem:3-4}. Take $\varphi=h^{-1}fh: J^2 \rightarrow J^2$. Then
$\varphi$ satisfies the conditions in Theorem \ref{thm:3-5}.
\end{proof}

\begin{rem}\label{main-sensitive}
Clearly, if the families $\mathcal A$ and $\mathcal A'$ consist of infinitely many
pairwise disjoint closed sets, then $f$ appearing in Corollary \ref{thm:3-3} and
$\varphi$ appearing in Theorem \ref{thm:3-5} are $(\omega, \alpha, \infty)$-sensitive.
Since the restrictions of $f$ and $\varphi$ to their nonwandering sets are identities,
the topological entropies $h_{top}(f)=h_{top}(\varphi)=0$ (see \cite[Corollary 8.6.1]{Wa82}).
Thus Theorem \ref{main1} is proved.
\end{rem}

\section{Permeating from the boundary of $J^2$ to its interior}\label{Sec-4}

\begin{defin} \label{handing set}
Let $\mathbf{X}$ be the family of all nonempty compact connected
sets in $J^2$, and let $s \in \{-1,1\}$.

\medskip

(1)\ \ A set $X \in \mathbf{X}$ is called a {\it \textbf{hanging
set}} on $J_s$ if $X \cap \partial J^2$ is a nonempty connected set
contained in $\stackrel{\ \circ}{J_s}$. Write
$$\mathbf{X}(s)=\{X \in \mathbf{X}: X \mbox{\ is a hanging set on } J_s\},$$
$$\mathbf{X}(s,0)=\{X \in \mathbf{X}(s): X \subset \stackrel{\ \circ}{J_s}\},$$
$$\mathbf{X}(s, Pt)=\{X \in \mathbf{X}(s): X \cap \partial J^2 \mbox{\ has only one point}\},$$
$$\mathbf{X}(s, Ac)=\{X \in \mathbf{X}(s): X \cap \partial J^2 \mbox{\ is an arc}\}.$$

(2)\ \ A set $Y \in \mathbf{X}$ is said to be {\it
\textbf{floating}} if $Y \cap \partial J^2=\emptyset$. Write
$$\mathbf{Y}=\{Y \in \mathbf{X}: Y \mbox{\ is a floating set}\}.$$
For any $Y \in \mathbf{Y}$, an arc $L \subset J^2$ is called a {\it
\textbf{cable from $Y$ to $J_s$}} if $L$ has an end point lying on
$\stackrel{\ \circ}{J_s}$, the other end point of $L$ lies on $Y$,
and $\stackrel{\ \circ}{L} \cap (\partial J^2 \cup Y)=\emptyset$.
Write
$$\mathbf{L}(Y, s)=\{L: L \mbox{\ is a cable from\ } Y \mbox{\ to\ } J_s\},$$
$$\mathbf{X}(Y, s)=\{Y \cup L: L \in \mathbf{L}(Y, s)\},$$
$$\mathbf{X}(\mathbf{Y}, s)=\bigcup \{\mathbf{X}(Y, s): Y \in \mathbf{Y}\}.$$
Then $\mathbf{X}(\mathbf{Y}, s) \subset \mathbf{X}(s,
Pt)-\mathbf{X}(s,0)$.

\medskip

(3)\ \ For any $X \in \mathbf{X}(s)$, a disc $D \in \mathbf{X}(s,
Ac)$ is called a {\it \textbf{hutch}} of $X$ if $X \cap J_s \subset
\mathrm{Int}(D \cap J_s)$ and $X-J_s \subset \stackrel{\ \circ}{D}$.
\end{defin}

\begin{defin} \label{permeable}
Let $X \in \mathbf{X}(s)$ where $s=-1$ or $1$, and let $D \in \mathbf{X}(s,
Ac)$ be a hutch of $X$. Write $A=D \cap J_s$. If there exists a
continuous map $\eta: D \rightarrow D$ such that

\medskip

(C.1)\ \ $\eta|(\partial D-\stackrel{\ \circ}{A})=id$, $\eta(A)=A
\cup X$, and there is an arc $A' \subset \stackrel{\ \circ}{A}$ such
that $X \cap J_s \subset \mathrm{Int}~A'$, $\eta(A-A')=A-(X \cap
J_s)$, $\eta(A')=X$, and $\eta|(A-A')$ is injective;

\medskip

(C.2)\ \ $\eta(\stackrel{\ \circ}{D})=\stackrel{\ \circ}{D}-X$, and
$\eta|\stackrel{\ \circ}{D}$ is injective,
\vspace{3mm}\\
then $\eta$ is called a {\it \textbf{permeating}} from $A$ to $X$,
and $X$ is called a {\it \textbf{permeable hanging set}}.
\end{defin}

As an example of permeable hanging sets, we have the following
result.

\begin{lem} \label{lem:4-3}
Let $X$ be a hanging set on $J_s$ where $s=-1$ or $1$, and let $D$ be a hutch
of $X$. If $X=T$ is a tree, then there exists a continuous map
$\eta: D \rightarrow D$ satisfying the conditions (C.1) and (C.2) in
Definition \ref{permeable}.
\end{lem}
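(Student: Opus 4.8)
The plan is to realize the permeating $\eta$ as the composition of a homeomorphism of $D$ with the canonical map that ``glues back'' the disc after it has been cut along the tree $X=T$. Write $A=D\cap J_s$, let $B=\partial D-\stackrel{\ \circ}{A}$ be the complementary boundary arc, and set $D^{*}=\overline{\stackrel{\ \circ}{D}-T}$. The first, and central, step is to prove that $D^{*}$ is again a disc. Since $T$ is a tree it is contractible and has a regular neighborhood $N$ in $D$ that deformation retracts onto $T$; because $T$ is connected and contains no circle, $N$ is itself a disc meeting $\partial D$ in an arc of $\stackrel{\ \circ}{A}$ (here we use $T\cap\partial J^2\subset\stackrel{\ \circ}{J_s}$ and $T-J_s\subset\stackrel{\ \circ}{D}$, which hold since $D$ is a hutch of $X$). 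Thus $\overline{D-N}$ is a disc (a disc with a boundary ``bite'' removed), and $D^{*}$ is recovered from it by re-attaching the collar $N-T$ along part of its boundary, an operation that does not change the homeomorphism type. Along the way one records the structure of $\partial D^{*}$: it is the union of $B$, two ``remnant'' arcs $A_1^{*},A_2^{*}$ coming from $A-(T\cap J_s)$, and an arc $\widehat{T}$ that double-traverses $T$, namely the boundary walk of the ribbon neighborhood, running along each edge of $T$ twice and turning at each leaf and branch vertex.

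Second, I would build the gluing map $g:D^{*}\to D$, the canonical quotient that re-identifies the two banks of the cut. It is a continuous surjection, it restricts to a homeomorphism of $D^{*}-\widehat{T}$ onto $D-T$ (hence of $\stackrel{\ \circ}{D^{*}}$ onto $\stackrel{\ \circ}{D}-T$), and it folds $\widehat{T}$ onto $T$, with $g|\widehat{T}$ realizing the double-traversal; moreover $g(A_1^{*})\cup g(A_2^{*})=A-(T\cap J_s)$ and $g|B=\mathrm{id}$. Existence of such a $g$ is precisely the ``inverse of cutting,'' and for a tree it is assembled edge-by-edge, or equivalently from a depth-first tour of $T$. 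Third, I would choose a homeomorphism $\Phi:D\to D^{*}$ with $\Phi(\stackrel{\ \circ}{D})=\stackrel{\ \circ}{D^{*}}$ and the prescribed boundary combinatorics: $\Phi|B=\mathrm{id}$, $\Phi(A')=\widehat{T}$ for a suitable subarc $A'\subset\stackrel{\ \circ}{A}$ with $T\cap J_s\subset\mathrm{Int}\,A'$, and $\Phi(A-A')=A_1^{*}\cup A_2^{*}$. Such a $\Phi$ exists because $\partial D$ and $\partial D^{*}$ are circles subdivided into matching arcs, and any boundary homeomorphism of a disc extends over the interior by the Schoenflies/Alexander theorem.

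Setting $\eta=g\circ\Phi$, the conditions (C.1) and (C.2) of Definition \ref{permeable} then follow by bookkeeping. Indeed $\eta|B=\mathrm{id}$ and $\eta(A')=g(\widehat{T})=T=X$, while $\eta(A-A')=g(A_1^{*}\cup A_2^{*})=A-(T\cap J_s)$ injectively, so $\eta(A)=A\cup X$. For the interior, $\eta(\stackrel{\ \circ}{D})=g(\stackrel{\ \circ}{D^{*}})=\stackrel{\ \circ}{D}-X$, and $\eta|\stackrel{\ \circ}{D}$ is injective because $g$ is injective off $\widehat{T}$ and $\Phi$ is a homeomorphism. Continuity at the two endpoints of $A'$ is automatic, since there $\Phi$ matches the endpoints of $\widehat{T}$, which $g$ sends to $X\cap J_s$.

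I expect the main obstacle to be the first two steps jointly: making rigorous that cutting the disc along a boundary-attached tree yields a disc, and that the folding $g|\widehat{T}$ is well defined and continuous at the branch vertices of $T$, where the boundary walk visits a single vertex several times and several banks must be identified simultaneously. A clean way to tame this is induction on the number of edges of $T$. The base case is a single slit attached at one point (or, degenerately, a sub-arc of $J_s$), where $\eta$ is an explicit fold of $A'$ into $X$; the inductive step pushes out one further pendant edge, which amounts to composing the permeating $\eta'$ obtained for $T'=\overline{T-\stackrel{\ \circ}{e}}$ with one more elementary slit-permeating performed inside $\stackrel{\ \circ}{D}-T'$. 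The only additional care is the mixed case in which $T\cap J_s$ is a non-degenerate sub-arc of $A$; there the portion of $T$ lying on $J_s$ is absorbed into $A'$, and the construction goes through with purely notational changes.
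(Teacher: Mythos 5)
Your proposal is correct and is essentially the paper's own argument: the paper likewise factors $\eta=\eta_2\eta_1$, where $\eta_1$ is a homeomorphism from $D$ onto a disc $D'$ obtained by digging out a neighborhood $U$ of $T$ (a planar realization of your cut-open disc $D^*$, with the inner part of $\partial U$ playing the role of your doubled-tree arc $\widehat{T}$), and $\eta_2$ glues $D'$ back by identifying points of $\partial U$ so that $T$ reappears (your map $g$). The only repair needed is your definition of $D^*$: the planar closure $\overline{\stackrel{\ \circ}{D}-T}$ is literally all of $D$ (the tree $T$ is nowhere dense), so $D^*$ must be introduced as the abstract cut space --- or, as in the paper's Figure 4.1, as $\overline{D-U}$ for a ribbon neighborhood $U$ of $T$ --- which is exactly how the rest of your argument in fact uses it.
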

\begin{proof}[Outline of proof]
The proof of Lemma \ref{lem:4-3} is not hard, and we give only
the following Figure 4.1 to illustrate the idea of the proof.
Notice that the map $\eta_1$ is a homeomorphism from $D$ to $D'$
and $\eta_2$ is a continuous map defined by identifying appropriately the points
in the boundary of $U$. Then $\eta=\eta_2\eta_1:D\rightarrow D$ is a permeating.
\end{proof}
\vspace{10mm}

\begin{center}
\setlength{\unitlength}{0.6mm}
\begin{picture}(220,60)

\put(0,60){\line(1,0){10}}
\multiput(10,60)(0.1,0){100}{\circle*0.2}
\multiput(20,60)(1,0){15}{\circle*0.2}
\multiput(35,60)(0.1,0){150}{\circle*0.2}
\put(50,60){\line(1,0){10}}
\put(0,30){\line(0,1){30}}
\put(60,30){\line(0,1){30}}
\put(0,30){\line(1,-1){15}}
\put(60,30){\line(-1,-1){15}}
\put(15,15){\line(1,0){30}}
\multiput(25,60)(-0.5,-1){10}{\circle*0.2}
\multiput(30,60)(0,-1){15}{\circle*0.2}
\multiput(30,45)(-1,-1){10}{\circle*0.2}
\multiput(30,45)(1,-1){6}{\circle*0.2}
\multiput(25,40)(1,-1){4}{\circle*0.2}
\put(28,20){\scriptsize$D$}
\put(33,47){\scriptsize$T$}

\put(80,60){\line(1,0){20}}
\multiput(100,60)(0.1,0){40}{\circle*0.2}
\multiput(106,60)(0.1,0){30}{\circle*0.2}
\multiput(111,60)(0.1,0){40}{\circle*0.2}
\put(115,60){\line(1,0){25}}
\put(80,30){\line(0,1){30}}
\put(140,30){\line(0,1){30}}
\put(80,30){\line(1,-1){15}}
\put(140,30){\line(-1,-1){15}}
\put(95,15){\line(1,0){30}}
\multiput(104,60)(-0.1,-0.2){40}{\circle*0.2}
\multiput(106,60)(-0.1,-0.2){40}{\circle*0.2}
\multiput(100,52)(0.1,0){20}{\circle*0.2}
\multiput(109,60)(0,-0.1){150}{\circle*0.2}
\multiput(111,60)(0,-0.1){150}{\circle*0.2}
\multiput(109,45)(-0.1,-0.1){90}{\circle*0.2}
\multiput(111,45)(0.1,-0.1){50}{\circle*0.2}
\multiput(110,43)(0.1,-0.1){60}{\circle*0.2}
\multiput(116,37)(0,0.1){30}{\circle*0.2}
\multiput(110,43)(-0.1,-0.1){40}{\circle*0.2}
\multiput(104,37)(-0.1,-0.1){40}{\circle*0.2}
\multiput(100,33)(0,0.1){30}{\circle*0.2}
\multiput(104,37)(0.1,-0.1){40}{\circle*0.2}
\multiput(106,39)(0.1,-0.1){40}{\circle*0.2}
\multiput(110,35)(-0.1,-0.1){20}{\circle*0.2}
\put(108,20){\scriptsize$D'$}

\put(160,60){\line(1,0){20}}
\multiput(180,60)(0.1,0){150}{\circle*1}
\put(195,60){\line(1,0){25}}
\put(160,60){\line(0,-1){30}}
\put(220,60){\line(0,-1){30}}
\put(160,30){\line(1,-1){15}}
\put(175,15){\line(1,0){30}}
\put(205,15){\line(1,1){15}}
\multiput(185,60)(-0.05,-0.1){100}{\circle*1}
\multiput(190,60)(0,-0.1){150}{\circle*1}
\multiput(190,45)(-0.1,-0.1){100}{\circle*1}
\multiput(190,45)(0.1,-0.1){60}{\circle*1}
\multiput(185,40)(0.1,-0.1){40}{\circle*1}
\put(188,20){\scriptsize$D$}
\put(193,47){\scriptsize$T$}

\put(60,17){\vector(1,0){20}} \put(140,17){\vector(1,0){20}}
\put(69,22){\scriptsize$\eta_1$} \put(149,22){\scriptsize$\eta_2$}
\put(2,5){\scriptsize{$T$ is a tree hanging on $J_1$,}}
\put(2,-1){\scriptsize{and $D$ is a hutch of $T$.}}
\put(81,5){\scriptsize{Dig out a neighborhood $U$}}
\put(81,-1){\scriptsize{of $T$, and obtain a disc $D'$.}}
\put(159,5){\scriptsize{Identify appropriately points }}
\put(159,-1){\scriptsize{of $\partial U$, and the tree $T$
reappears.}} \put(96,-15){{Figure 4.1}}
\end{picture}
\end{center}

\vspace{10mm}

\begin{lem} \label{lem:4-4}
Let $X$ be a permeable hanging set on $J_s$ where $s=-1$ or $1$, and let $D$
be a hutch of $X$. Let $A=D \cap J_s$, $A' \subset \stackrel{\
\circ}{A}$ and $\eta: D \rightarrow D$ be as in Definition
\ref{permeable}. If $X \in \mathbf{X}(\mathbf{Y}, s)$, that is,
there is a floating set $Y \in \mathbf{Y}$ and a cable $L$ from $Y$
to $J_s$ such that $X=Y \cup L$, then there exists an arc $A''
\subset \mathrm{Int}~A'$ such that $\eta(A'')=Y$.
\end{lem}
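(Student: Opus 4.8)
The plan is to reduce the statement to a single connectedness assertion about the boundary map $\eta|_{A'}$, and then settle that assertion using the injectivity hypothesis (C.2). First I would record the rigid local structure forced by the hypothesis $X=Y\cup L\in\mathbf{X}(\mathbf{Y},s)$: since $Y$ is floating and $L$ is a cable, $X\cap\partial J^2=\{a\}$ is a single point with $a\in L$ and $a\notin Y$, and $L\cap Y=\{b\}$ is a single point, so that $b$ is a cut point of the continuum $X$ separating $L\setminus\{b\}$ from $Y\setminus\{b\}$. Writing $A'=[p_1,p_2]$ with the induced order, the description of $\eta$ on $A-A'$ in (C.1) together with continuity and the two-component structure of $A-\{a\}$ forces $\eta(p_1)=\eta(p_2)=a$. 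The natural candidate is $A''=(\eta|_{A'})^{-1}(Y)$: because $\eta(A')=X\supseteq Y$, every point of $Y$ already has a preimage in $A'$, so $\eta(A'')=Y$ automatically, and since $\eta(p_i)=a\notin Y$ we get $A''\subset\mathrm{Int}\,A'$. Thus the entire lemma comes down to showing that $A''$ is a (nondegenerate) arc, i.e. that it is connected.

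To localize $A''$, put $g=\eta|_{A'}$ and set $q_1=\min g^{-1}(b)$ and $q_2=\max g^{-1}(b)$ in the order on $A'$. Using that $b$ separates $L\setminus\{b\}$ from $Y\setminus\{b\}$ and that $g(p_1)=g(p_2)=a\in L\setminus\{b\}$, a connectedness (intermediate-value) argument on $[p_1,q_1]$ and on $[q_2,p_2]$ shows $[p_1,q_1)\cup(q_2,p_2]\subseteq g^{-1}(L\setminus\{b\})$, which is disjoint from $g^{-1}(Y)$. Hence $g^{-1}(Y)\subseteq[q_1,q_2]$; since this means all $g$-preimages of $Y$ lie in $[q_1,q_2]$, surjectivity onto $X\supseteq Y$ gives $g([q_1,q_2])\supseteq Y$, and in particular $q_1<q_2$. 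It therefore suffices to prove the reverse inclusion $g([q_1,q_2])\subseteq Y$, i.e. that $g$ does not re-enter $L\setminus\{b\}$ on $(q_1,q_2)$; granting this, $A''=[q_1,q_2]$ is the desired arc.

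The hard part is exactly this no-re-entry statement, and it is where I expect to have to use (C.2) in an essential, planar way. Suppose some $r_0\in(q_1,q_2)$ had $c:=g(r_0)\in L\setminus\{b\}$. If $c\in\stackrel{\circ}{L}$, then $c$ is itself a cut point separating $a$ from $b$, and the same intermediate-value argument on $[p_1,q_1]$ and $[q_2,p_2]$ yields two further preimages of $c$, so $|g^{-1}(c)|\ge 3$. I would contradict this using the local two-sidedness of $\stackrel{\circ}{D}-X$ along the cable: choosing a small disc $N\ni c$ in $\stackrel{\circ}{D}$ with $N\cap X=N\cap L$ a single arc, the set $N\setminus X$ has exactly two components, and through the homeomorphism $(\eta|_{\stackrel{\circ}{D}})^{-1}$ onto $\stackrel{\circ}{D}-X$ each preimage $s\in g^{-1}(c)\subset A'\subset\partial D$ acquires a well-defined "side"; a prime-end–type argument then shows that the cable is traced exactly twice, once on each approach to $p_1$ and $p_2$, so a third crossing at $c$ is impossible. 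The boundary case $c=a$ is handled analogously, using that $a$ is a leaf of $X$. An alternative packaging of this crux, which I would pursue if the direct side-counting proves delicate, is to note that $\eta$ is a quotient map identifying points of $D$ only within $A'$ via $g$, with $D/{\sim}\,\cong D$, and that recovering a genuine $2$-disc forces the standard slit-folding identification along the cable. Either way, once $g((q_1,q_2))\subseteq Y$ is established, $A''=[q_1,q_2]\subset\mathrm{Int}\,A'$ is an arc with $\eta(A'')=Y$, completing the proof.
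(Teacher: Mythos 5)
First, a point of comparison: the paper gives no proof at all for Lemma \ref{lem:4-4} (it is declared ``not hard'' and omitted), so your attempt can only be judged on its own merits. The skeleton of your argument is sound and is surely close to what any proof must do: $\eta(p_1)=\eta(p_2)=a$ is correctly forced, the localization $g^{-1}(Y)\subseteq[q_1,q_2]$ via connectedness of images in $X-\{b\}=(L-\{b\})\sqcup(Y-\{b\})$ is correct, and the lemma is correctly reduced to the no-re-entry claim $g((q_1,q_2))\subseteq Y$. One caveat before the main issue: your parenthetical claims that $A''$ is nondegenerate and that $q_1<q_2$ do not follow when $Y$ is a singleton --- which is exactly the case the paper uses in Theorem \ref{cor:4-9}. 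For the standard ``fold'' permeating of a slit (model: $z\mapsto z^2$ on a half-disc), the tip $b$ has a \emph{single} preimage, so $A''$ must be allowed to degenerate to a point; only the inference ``$Y$ nondegenerate $\Rightarrow q_1<q_2$'' is valid.

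The genuine gap is the crux itself, and the specific route you propose for it fails. Your contradiction target is the bound $|g^{-1}(c)|\le 2$ for $c\in\stackrel{\circ}{L}$ (``the cable is traced exactly twice, so a third crossing at $c$ is impossible''). That cardinality bound is \emph{false}: Definition \ref{permeable} permits $\eta$ to collapse subarcs of $A'$ to points. Indeed, if $\eta_0$ is any permeating and $\kappa:D\to D$ is a map which collapses a small arc $\beta\subset\stackrel{\circ}{A'}$ to one of the two $\eta_0$-preimages of $c$, is injective elsewhere, equals the identity off a small neighborhood of $\beta$, and restricts to a homeomorphism of $\stackrel{\circ}{D}$ onto $\stackrel{\circ}{D}$ (easily built by interpolating boundary reparametrizations), then $\eta_0\circ\kappa$ is again a permeating and $g^{-1}(c)\supseteq\beta\cup\{\text{point}\}$ is uncountable. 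The invariant bounded by two is not the number of preimages but the number of \emph{components} of $g^{-1}(c)$ (equivalently, of ``sides''), and proving even that component bound is precisely the hard content, which your sketch defers to an unproven ``prime-end-type argument.'' Note also that prime-end structure is defined through the embedding of a domain and is not preserved by a mere homeomorphism of open domains; what must be exploited is that $\eta|\stackrel{\circ}{D}$ extends continuously to $\partial D$, and that exploitation is exactly the missing argument. Your fallback (``$D/\!\sim\;\cong D$ forces the standard slit-folding identification'') begs the same question and is false as stated, again because of collapsing.

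The gap can be closed with a separation argument that needs no counting and handles $c=a$ and degenerate $Y$ uniformly. Suppose $t\in(q_1,q_2)$ and $w:=g(t)\in L-\{b\}$. Take an arc $\sigma\subset D$ from $q_1$ to $q_2$ with $\stackrel{\circ}{\sigma}\subset\stackrel{\circ}{D}$. Since $\eta$ is injective on $\stackrel{\circ}{D}$ with image $\stackrel{\circ}{D}-X$ and $\eta(q_1)=\eta(q_2)=b\in\stackrel{\circ}{D}$, the set $C:=\eta(\sigma)$ is a Jordan curve contained in $\stackrel{\circ}{D}$ with $C\cap X=\{b\}$. The cross-cut $\sigma$ splits $D$ into $E_1$ (whose closure contains the arc of $A'$ from $q_1$ to $q_2$) and $E_2$ (whose closure contains $\partial D$ minus that arc); each set $\eta(E_i\cap\stackrel{\circ}{D})$ is connected and disjoint from $C$, hence lies in a single complementary component of $C$. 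Because $\eta$ fixes $\partial D-\stackrel{\circ}{A}$, the set $\eta(E_2\cap\stackrel{\circ}{D})$ lies in the unbounded component; and $\eta(E_1\cap\stackrel{\circ}{D})$ must lie in the bounded one, since otherwise the bounded component, a nonempty open set, would be contained in $X$, contradicting the fact that $X$ has empty interior (Claim 1 in the proof of Proposition \ref{prop:4-5}). Letting interior points of $E_1$ tend to $t$ and interior points of $E_2$ tend to $p_1$ places $w$ in the bounded component and $a$ in the unbounded one. But $w$ and $a$ both lie in $L-\{b\}$, a connected set disjoint from $C$ --- a contradiction. Hence $g([q_1,q_2])=Y$ and $A''=[q_1,q_2]$ (possibly a single point) works.
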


The proof of Lemma \ref{lem:4-4} is not hard, and is omitted.

\begin{prop} \label{prop:4-5}
Let $X$ be a hanging set on $J_s$, where $s=-1$ or $1$. If $X$ is
permeable, then $X$ is a dendrite.
\end{prop}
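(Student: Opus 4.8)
The plan is to establish the two clauses of the definition of a dendrite independently: that $X$ is a Peano continuum, and that $X$ contains no circle. The first is essentially free: by Definition \ref{permeable} the permeating map satisfies $\eta(A')=X$ with $A'$ an arc, so $X$ is a continuous image of $[0,1]$ and hence a Peano continuum by the Hahn--Mazurkiewicz theorem. All the work goes into excluding circles, and I would reduce this to two geometric facts about the pair $(D,\eta)$: (a) the complement $\mathbb R^2-X$ is connected, and (b) $X$ has empty interior, $\stackrel{\ \circ}{X}=\emptyset$. Granting these, suppose some circle $C\subset X$; then $C$ bounds a bounded complementary component $U$, and by (b) the nonempty open set $U$ is not contained in $X$, so there is a point $p\in U-X$, while any sufficiently distant point $p'$ lies in the unbounded component of $\mathbb R^2-C$ and off $X$. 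Since $\mathbb R^2-X$ is open and connected, hence path connected, we could join $p$ to $p'$ by a path inside $\mathbb R^2-X\subset\mathbb R^2-C$; this is impossible because $C$ separates $p$ from $p'$. (The closed disc, whose complement is connected but which contains circles, shows that (b) really is needed alongside (a).)

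Both (a) and (b) rest on identifying $\eta^{-1}(X)$. First I would record the boundary bookkeeping forced by the hutch: since $X\cap\partial J^2=X\cap J_s\subset\mathrm{Int}\,A$ and $X-J_s\subset\stackrel{\ \circ}{D}$, one gets $X\cap\partial D=X\cap J_s$, a nonempty connected \emph{proper} subset of the circle $\partial D$ lying in $\stackrel{\ \circ}{A}$. Now by (C.2) we have $\eta(\stackrel{\ \circ}{D})=\stackrel{\ \circ}{D}-X$, which is disjoint from $X$; on $\partial D-\stackrel{\ \circ}{A}$ the map $\eta$ is the identity onto a set disjoint from $X$; and by (C.1), $\eta(A-A')=A-(X\cap J_s)$ is also disjoint from $X$. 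As $\eta(A')=X$, these three observations give exactly $\eta^{-1}(X)=A'$. Property (b) is then immediate: an interior point of $X$ cannot lie on $\partial D$, so a nonempty ball $B\subset\stackrel{\ \circ}{X}$ can be chosen inside $\stackrel{\ \circ}{D}$, making $B$ open in $D$; then $\eta^{-1}(B)$ is open in $D$, yet $B\subset X$ forces $\eta^{-1}(B)\subset\eta^{-1}(X)=A'\subset\partial D$. An open subset of $D$ contained in the nowhere dense boundary $\partial D$ is empty, contradicting $\emptyset\neq B\subset X=\eta(A')$.

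For (a) I would write $\mathbb R^2-X$ as the disjoint union of $Z_1=\mathbb R^2-D$, $Z_2=\partial D-X=\partial D-(X\cap J_s)$, and $Z_3=\stackrel{\ \circ}{D}-X$. Each piece is connected: $Z_1$ is the complement of a disc; $Z_2$ is a circle with a connected proper closed subset removed, hence an open arc; and $Z_3=\eta(\stackrel{\ \circ}{D})$ by (C.2), a continuous image of the connected set $\stackrel{\ \circ}{D}$. Moreover $Z_2\subset\overline{Z_1}$ trivially, and since $X$ is closed every point of $Z_2$ is a limit of nearby interior points lying off $X$, so $Z_2\subset\overline{Z_3}$ as well. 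Consequently both $Z_1\cup Z_2$ and $Z_2\cup Z_3$ are connected (each is squeezed between a connected set and its closure), and gluing them along the nonempty set $Z_2$ shows that $\mathbb R^2-X$ is connected.

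The delicate step is clearly (b). A priori the restriction $\eta|A'$ could be space filling, so the mere fact that $X$ is a continuous image of an arc does not rule out $X$ having interior; what saves the argument is that the \emph{entire} preimage $\eta^{-1}(X)$ collapses onto the one--dimensional boundary arc $A'$, which is incompatible with $X$ swallowing a two--dimensional ball. Everything else is soft plane topology, and it is worth noting that the injectivity clauses in (C.1)--(C.2) are not actually needed for this proposition.
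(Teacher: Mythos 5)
Your proof is correct; every step checks out against the definitions of hanging set, hutch, and permeating, and it shares the paper's skeleton (Peano continuum via $X=\eta(A')$, empty interior of $X$, and a separation contradiction powered by the connectedness of $\eta(\stackrel{\ \circ}{D})=\stackrel{\ \circ}{D}-X$) while executing both main steps by genuinely different means. For empty interior, the paper's Claim 1 works pointwise: given $x\in X$ it pulls back a neighborhood $U$ of $x$ through $\eta$ to a neighborhood $V$ of a preimage $y\in A'$, notes that $V$ meets $\stackrel{\ \circ}{D}$, and concludes $U-X\supset\eta\big(V\cap\stackrel{\ \circ}{D}\big)\neq\emptyset$; you instead establish the global identity $\eta^{-1}(X)=A'$ from (C.1)--(C.2) together with the hutch bookkeeping $X\cap\partial D=X\cap J_s\subset\stackrel{\ \circ}{A}$, and then observe that a ball $B\subset X\cap\stackrel{\ \circ}{D}$ would make $\eta^{-1}(B)$ a nonempty open subset of $D$ contained in $\partial D$. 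For the exclusion of circles, the paper never leaves $D$: its displays \eqref{eq:4-1}--\eqref{eq:4-3} show that a circle $C\subset X$ would leave $\stackrel{\ \circ}{D}-X$ with points inside $\mathrm{Dsc}(C)$, points outside it, and none on $C$, hence disconnected, contradicting that it is the continuous image of $\stackrel{\ \circ}{D}$; you prove the stronger statement that all of $\mathbb{R}^2-X$ is connected, by gluing $\mathbb{R}^2-D$, the open arc $\partial D-X$, and $\eta(\stackrel{\ \circ}{D})$ along the nonempty middle piece, and then contradict Jordan separation with a path from inside $C$ to infinity. Your route costs slightly more (the three-piece gluing, plus the standard fact that the manifold boundary of the embedded disc $D$ is its topological frontier, so that $\partial D-X$ lies in the closures of both neighboring pieces), but it buys a cleaner conceptual output: together with empty interior, the statement that $X$ does not separate the plane is exactly the classical characterization of dendrites among planar Peano continua, and your closed-disc remark correctly explains why both halves are needed. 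Your two side observations are also accurate and apply equally to the paper's proof: the injectivity clauses in (C.1)--(C.2) are never used, and the reason a potentially space-filling $\eta|A'$ cannot create interior is precisely the collapse of the full preimage $\eta^{-1}(X)$ onto the one-dimensional arc $A'$.
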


\begin{proof}
Let $D$, $A$, $A'$ and $\eta: D \rightarrow D$ be the same as in
Definition \ref{permeable}. Since $X=\eta(A')$ is the continuous
image of arc $A'$, $X$ is a Peano continuum by  \cite[Theorem 8.18]{Na92}. For any point $x \in X$, take a point $y \in A'$ such that
$\eta(y)=x$. Then for any neighborhood $U$ of $x$ in $D$, there is a
neighborhood $V$ of $y$ in $D$ such that $\eta(V) \subset U$. Since
$V \cap \stackrel{\ \circ}{D} ~\neq \emptyset$ and $\eta(V \cap
\stackrel{\ \circ}{D}) \subset \eta(\stackrel{\
\circ}{D})~=~\stackrel{\ \circ}{D}-X$, we have
$$U-X \supset \eta(V)-X \supset \eta\big(V \cap \stackrel{\ \circ}{D}\big)
-X=\eta\big(V \cap \stackrel{\ \circ}{D}\big) ~\neq~ \emptyset.$$
Thus any neighborhood $U$ of any $x \in X$ is not contained in $X$,
and hence we get

\medskip

{\bf Claim 1.}\ \ $X$ contains no disc.

\medskip

If $X$ contains a circle $C$, then
\begin{equation} \label{eq:4-1}
C \cap \eta\big(\stackrel{\ \circ}{D}\big)= C \cap \big(\stackrel{\
\circ}{D}-X\big) \subset X - \big(\stackrel{\ \circ}{D}-X\big) =
\emptyset.
 \end{equation}
By Claim 1, we get
\begin{equation} \label{eq:4-2}
\mathrm{Int}~\mathrm{Dsc}(C) \cap \eta\big(\stackrel{\
\circ}{D}\big) = \mathrm{Int}~\mathrm{Dsc}(C) \cap \big(\stackrel{\
\circ}{D}-X\big) \neq \emptyset.
 \end{equation}
For any arc $A^* \subset \partial D-A$, we have
$$d(A^*, \mathrm{Dsc}(C))=d(A^*, C) \geq d(A^*, X)>0,$$
which with $\eta|(\partial D-A)=id$ implies
\begin{equation} \label{eq:4-3}
\eta\big(\stackrel{\ \circ}{D}\big)-\mathrm{Dsc}(C) \neq \emptyset.
 \end{equation}
From \eqref{eq:4-1}--\eqref{eq:4-3} we see that
$\eta\big(\stackrel{\ \circ}{D}\big)$ is not connected. On the other
hand, $\stackrel{\ \circ}{D}$ is connected, and hence
$\eta\big(\stackrel{\ \circ}{D}\big)$ is also connected. This leads
to a contradiction. Thus $X$ cannot contain a circle, and hence $X$
is a dendrite. Proposition \ref{prop:4-5} is proved.
\end{proof}

As an inverse of Proposition \ref{prop:4-5}, we leave the following
question.

\begin{ques} \label{conj:4-6}
Let $X$ be a dendrite hanging on $J_s$, where $s=-1$ or $1$. Must $X$ be
permeable?
\end{ques}

In Theorem \ref{thm:3-5}, the $\omega$-limit sets and the
$\alpha$-limit sets of points $x \in \stackrel{\
\circ}{J}\mbox{$^2$}$ under the homeomorphism $\varphi: J^2
\rightarrow J^2$ are always in $J_1$ and $J_{-1}$, respectively. By
means of the permeating $\xi: J^2 \rightarrow J^2$ we can obtain a
homeomorphism $\psi: J^2 \rightarrow J^2$ such that the
$\omega$-limit sets and the $\alpha$-limit sets of   $x \in
\stackrel{\ \circ}{J}\mbox{$^2$}$ under $\psi$ need not be confined
in $J_1$ or $J_{-1}$.

\begin{thm} \label{thm:4-7}
For $n \in \mathbb{N}$, let $X_{n1} \in \mathbf{X}(1)$ be a
permeable hanging set on $J_1$, $X_{n2} \in \mathbf{X}(-1)$ be a
permeable hanging set on $J_{-1}$, and $D_{n1}$ be a hutch of
$X_{n1}$, $D_{n2}$ be a hutch of $X_{n2}$. Suppose that
$$\lim_{n \rightarrow \infty} \mathrm{diam}(D_{n1})=
\lim_{n \rightarrow \infty} \mathrm{diam}(D_{n2})=0,$$ and the discs
in the family $\{D_{n1}, D_{n2}: n \in \mathbb{N}\}$ are pairwise
disjoint. Write
$$X_{\mathbb{N}}=\bigcup\{X_{n1} \cup X_{n2}: n \in \mathbb{N}\}.$$

Let $\mathbb{N}'$ and $\mathbb{N}''$ be two subsets of $\mathbb{N}$
such that $\{X_{n1}: n \in \mathbb{N}'\} \subset
\mathbf{X}(\mathbf{Y}, 1)$ and $\{X_{n2}: n \in \mathbb{N}''\}
\subset \mathbf{X}(\mathbf{Y}, -1)$, that is, for each $n \in
\mathbb{N}'$ there is a floating set $Y_{n1} \in \mathbf{Y}$ and a
cable $L_{n1}$ from $Y_{n1}$ to $J_1$ such that $X_{n1}=Y_{n1} \cup
L_{n1}$; for each $n \in \mathbb{N}''$ there is a floating set
$Y_{n2} \in \mathbf{Y}$ and a cable $L_{n2}$ from $Y_{n2}$ to
$J_{-1}$ such that $X_{n2}=Y_{n2} \cup L_{n2}$.

\medskip

Let $\{W_n: n \in \mathbb{N}\}$ be a family of pairwise disjoint
countable set in $\stackrel{\ \circ}{J}$$^{\;2}-X_{\mathbb{N}}$, in
which each $W_n$ is dense in $J^2$. Then there exists a
homeomorphism $\psi: J^2 \rightarrow J^2$ such that

\medskip

{\rm (1)}\ \ For any $n \in \mathbb{N}-\mathbb{N}'$ and any $w \in
W_n$, one has $\omega_{\psi}(w)=X_{n1}$;

\medskip

{\rm (2)}\ \ For any $n \in \mathbb{N}'$ and any $w \in W_n$, one
has $\omega_{\psi}(w)=Y_{n1}$;

\medskip

{\rm (3)}\ \ For any $n \in \mathbb{N}-\mathbb{N}''$ and any $w \in
W_n$, one has $\alpha_{\psi}(w)=X_{n2}$;

\medskip

{\rm (4)}\ \  For any $n \in \mathbb{N}''$ and any $w \in W_n$, one
has $\alpha_{\psi}(w)=Y_{n2}$.
\end{thm}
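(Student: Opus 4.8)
The plan is to obtain $\psi$ as a topological factor of a normally rising homeomorphism $\varphi$ supplied by Theorem \ref{thm:3-5}, with the factor map being a single global permeating $\xi$. The mechanism is that $\varphi$ can only realize connected \emph{arcs} of $J_1$ and $J_{-1}$ as limit sets, while collapsing such an arc onto a dendrite through a permeating converts an arc-shaped $\varphi$-limit set into the prescribed hanging set. To assemble $\xi$, for each $n$ and $i\in\{1,2\}$ let $\eta_{ni}:D_{ni}\to D_{ni}$ be a permeating of $X_{ni}$ (Definition \ref{permeable}), with associated arc $A'_{ni}$ satisfying $\eta_{ni}(A'_{ni})=X_{ni}$; for $n\in\mathbb N'$ (resp. $n\in\mathbb N''$) let $A''_{n1}\subset\mathrm{Int}\,A'_{n1}$ (resp. $A''_{n2}\subset\mathrm{Int}\,A'_{n2}$) be the subarc from Lemma \ref{lem:4-4}, so $\eta_{n1}(A''_{n1})=Y_{n1}$ (resp. $\eta_{n2}(A''_{n2})=Y_{n2}$). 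Define $\xi:J^2\to J^2$ by $\xi|D_{ni}=\eta_{ni}$ and $\xi=\mathrm{id}$ off the hutches. Since each $\eta_{ni}$ is the identity on $\partial D_{ni}-\stackrel{\ \circ}{A_{ni}}$ and displaces points by at most $\mathrm{diam}(D_{ni})$, the disjointness of the hutches together with $\mathrm{diam}(D_{ni})\to 0$ makes $\xi$ a well-defined continuous surjection, continuity at accumulation points of the hutches being guaranteed by $\mathrm{diam}(D_{ni})\to 0$. By condition (C.2), $\xi$ is injective on $\mathrm{Int}\,J^2$ and collapses only along the arcs $A'_{ni}\subset J_1\cup J_{-1}$, with $\xi(A'_{ni})=X_{ni}$.

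Next I would produce $\varphi$. Put $V_n=\xi^{-1}(W_n)$. Since each $W_n\subset \mathrm{Int}\,J^2-X_{\mathbb N}$ lies in the injective image of $\xi|\mathrm{Int}\,J^2$ (which misses only the interior parts of the hanging sets), the sets $V_n$ are countable, pairwise disjoint subsets of $\mathrm{Int}\,J^2$, and the density of each $W_n$ together with the local structure of $\xi$ forces each $V_n$ to be dense in $J^2$. Now set $\omega(n)=A'_{n1}$ for $n\notin\mathbb N'$ and $\omega(n)=A''_{n1}$ for $n\in\mathbb N'$, and symmetrically $\alpha(n)=A'_{n2}$ or $A''_{n2}$ according as $n\notin\mathbb N''$ or $n\in\mathbb N''$; each of these is an arc, hence a member of $\mathcal A=\mathcal C_1$ or $\mathcal A'=\mathcal C_{-1}$. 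Theorem \ref{thm:3-5} then yields a homeomorphism $\varphi:J^2\to J^2$ with $\omega_{\varphi}(v)=\omega(n)$ and $\alpha_{\varphi}(v)=\alpha(n)$ for every $v\in V_n$.

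The crux is to descend $\varphi$ through the non-injective map $\xi$ to a homeomorphism $\psi$. Here I would exploit the defining feature of a normally rising map: since $f_{01}(\pm 1)=\pm 1$, one has $f_{02}|J_1=f_{02}|J_{-1}=\mathrm{id}$, and as the $\varphi$ constructed in Theorem \ref{thm:3-5} is conjugate to a normally rising $f$ by a boundary-fixing homeomorphism, $\varphi$ restricts to the identity on $J_1$ and on $J_{-1}$. Because the only nontrivial fibers of $\xi$ are contained in $\bigcup A'_{ni}\subset J_1\cup J_{-1}$ and $\varphi$ fixes those arcs pointwise, the continuous map $\xi\varphi$ is constant on the fibers of $\xi$; since $\xi$ is a quotient map ($J^2$ being compact Hausdorff), it factors uniquely as $\xi\varphi=\psi\xi$ for a continuous $\psi:J^2\to J^2$. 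The same fiber analysis (using that $\varphi$ permutes the fibers of $\xi$ bijectively) shows $\psi$ is a bijection, and a continuous bijection of the compact Hausdorff space $J^2$ is a homeomorphism. I expect this step — verifying that $\varphi$ respects the fibers of $\xi$ — to be the main obstacle, and it is precisely the edge-fixing property that resolves it.

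Finally, $\xi$ is a factor map from $(J^2,\varphi)$ to $(J^2,\psi)$, so the standard identity $\xi(\omega_{\varphi}(v))=\omega_{\psi}(\xi(v))$, and (applying it to $\varphi^{-1},\psi^{-1}$) $\xi(\alpha_{\varphi}(v))=\alpha_{\psi}(\xi(v))$, hold for every $v$. For $w\in W_n$ write $w=\xi(v)$ with the unique $v\in V_n$; then $\omega_{\psi}(w)=\xi(\omega_{\varphi}(v))=\xi(\omega(n))$, which equals $\eta_{n1}(A'_{n1})=X_{n1}$ when $n\notin\mathbb N'$ and $\eta_{n1}(A''_{n1})=Y_{n1}$ when $n\in\mathbb N'$, giving conclusions (1) and (2); the $\alpha$-computation yields (3) and (4) identically. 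The only genuinely delicate verifications are the continuity and fiber-compatibility of $\xi$ and the density of the $V_n$; everything else is a formal consequence of the factor-map relation $\xi\varphi=\psi\xi$.
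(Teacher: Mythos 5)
Your proposal is correct and takes essentially the same route as the paper's own proof: assemble the compositive permeating $\xi$ from the $\eta_{ni}$, pull back $V_n=\xi^{-1}(W_n)$, feed the arcs $A'_{ni},A''_{ni}$ supplied by Lemma \ref{lem:4-4} into Theorem \ref{thm:3-5} to get $\varphi$, and descend through $\xi$ to $\psi$ using the key fact that the nontrivial fibers of $\xi$ lie in $J_1\cup J_{-1}$, which $\varphi$ fixes pointwise. The only cosmetic difference is that you obtain $\psi$ from the quotient-map universal property ($\xi\varphi=\psi\xi$, with bijectivity from $\varphi$ permuting fibers), whereas the paper writes $\psi=\xi\varphi\xi^{-1}$ and handles the set-valued $\xi^{-1}$ directly; your factor-map identity for $\omega$- and $\alpha$-limit sets is exactly the paper's Claims 3 and 4.
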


\begin{proof}
For $n \in \mathbb{N}$, write $A_{n1}=D_{n1} \cap J_1$,
$A_{n2}=D_{n2} \cap J_{-1}$, and for $i \in \mathbb{N}_2$, let
$\eta_{ni}: D_{ni} \rightarrow D_{ni}$ be a permeating from $A_{ni}$
to $X_{ni}$. Define a map $\xi: J^2 \rightarrow J^2$ by
$$\xi|D_{n1}=\eta_{n1} \mbox{\ \ and\ \ } \xi|D_{n2}=\eta_{n2} \mbox{\ \ for any\ } n \in
\mathbb{N},$$ and
$$\xi|\big(J^2-\bigcup\{D_{n1} \cup D_{n2}: n \in \mathbb{N}\}\big)=id.$$
Then $\xi$ is continuous, called a {\it \textbf{compositive
permeating}} from $J_1 \cup J_{-1}$ to $X_{\mathbb{N}}$.

\medskip

Note that $W_n \subset \stackrel{\
\circ}{J}\mbox{$^2$}-X_{\mathbb{N}} \subset J^2-(J_1 \cup J_{-1}
\cup X_{\mathbb{N}})$ for all $n \in \mathbb{N}$. Let
$V_n=\xi^{-1}(W_n)$. Then $\{V_n: n \in \mathbb{N}\}$ is a family of
pairwise disjoint countable sets in $\stackrel{\
\circ}{J}\mbox{$^2$}=\xi^{-1}\big(\stackrel{\
\circ}{J}\mbox{$^2$}-X_{\mathbb{N}}\big)$, and each $V_n$ is dense
in $J^2$. Using Lemma \ref{lem:4-4}, we take maps $\omega:
\mathbb{N} \rightarrow \mathcal{A}$ and $\alpha: \mathbb{N}
\rightarrow \mathcal{A}'$ as follows:

\medskip

(a)\ \ For any $n \in \mathbb{N}-\mathbb{N}'$, let $A'_{n1}$ be an
arc in $\stackrel{\ \circ}{A_{n1}}$ such that $\xi(A'_{n1})=X_{n1}$
and we take $\omega(n)=A'_{n1}$;

\medskip

(b)\ \ For any $n \in \mathbb{N}'$, let $A''_{n1}$ be an arc in
$\stackrel{\ \circ}{A_{n1}}$ such that $\xi(A''_{n1})=Y_{n1}$ and we
take $\omega(n)=A''_{n1}$;

\medskip

(c)\ \ For any $n \in \mathbb{N}-\mathbb{N}''$, let $A'_{n2}$ be an
arc in $\stackrel{\ \circ}{A_{n2}}$ such that $\xi(A'_{n2})=X_{n2}$
and we take $\alpha(n)=A'_{n2}$;

\medskip

(d)\ \ For any $n \in \mathbb{N}''$, let $A''_{n2}$ be an arc in
$\stackrel{\ \circ}{A_{n2}}$ such that $\xi(A''_{n2})=Y_{n2}$ and we
take $\alpha(n)=A''_{n2}$.

\medskip

By Theorem \ref{thm:3-5}, there exists a normally rising
homeomorphism $\varphi: J^2 \rightarrow J^2$ such that, for any $n
\in \mathbb{N}$ and any $v \in V_n$, one has
$\omega_{\varphi}(v)=\omega(n)$ and $\alpha_{\varphi}(v)=\alpha(n)$.

\medskip

Let $\psi=\xi \varphi \xi^{-1}$.

\medskip

Strictly speaking, both $\xi^{-1}$ and $\xi \varphi \xi^{-1}$ are
maps from $J^2$ to the family of all subsets of $J^2$, not maps from
$J^2$ to $J^2$. However, for some $y \in Y \subset J^2$, if
$\xi^{-1}(y)$ contains just one point $x$, then we may write
$\xi^{-1}(y)=x$ for $\xi^{-1}(y)=\{x\}$. If for each $z \in Y$,
$\xi^{-1}(z)$ contains just one point, then we may regard
$\xi^{-1}|Y$ as a map from $Y$ to $J^2$ or a bijection from $Y$ to
$\xi^{-1}(Y)$. Under this definition, from Definition
\ref{permeable} we get

\medskip

{\bf Claim 1.}\ \ $\xi^{-1}|\big(J^2-(J_1 \cup J_{-1} \cup
X_{\mathbb{N}})\big)$ is a bijection from $J^2-(J_{1} \cup J_{-1}
\cup X_{\mathbb{N}})$ to $J^2-J_1-J_{-1}$.

\medskip

Handing $\psi=\xi \varphi \xi^{-1}$ similarly, we have

\medskip

{\bf Claim 2.}\ \ $\psi$ is a homeomorphism from $J^2$ to $J^2$.

\medskip

{\bf Proof of Claim 2.}\ \ Consider any $x \in J^2$. If $x \in
J^2-(J_1 \cup J_{-1} \cup X_{\mathbb{N}})$, then $\xi^{-1}(x)$
contains only one point, and hence $\xi\varphi\xi^{-1}(x)$ also
contains only one point. If $x \in J_1 \cup J_{-1} \cup
X_{\mathbb{N}}$, then $\xi^{-1}(x)$ may contain more that one point,
but in this case $\xi^{-1}(x) \subset J_1 \cup J_{-1} \cup
\mathrm{Fix}(\varphi)$, which implies that $\xi \varphi \xi^{-1}(x)$
still contains only one point, and this point is $x$ itself. Thus
$\psi=\xi \varphi \xi^{-1}$ is a map from $J^2$ to $J^2$, and we
have
\begin{equation} \label{eq:4-4}
\psi|(J_1 \cup J_{-1} \cup X_{\mathbb{N}})=id.
 \end{equation}

Since $\varphi|(J^2-J_1-J_{-1})$ is also a bijection from
$J^2-J_1-J_{-1}$ to itself, it follows from Claim 1 that
$\psi|\big(J^2-(J_1 \cup J_{-1} \cup X_{\mathbb{N}})\big)$ is a
bijection from $J^2-(J_1 \cup J_{-1} \cup X_{\mathbb{N}})$ to
itself, which with \eqref{eq:4-4} implies that $\psi=\xi \varphi
\xi^{-1}: J^2 \rightarrow J^2$ is a bijection.

\medskip

For any closed set $Z$ in $J^2$, $\xi^{-1}(Z)$,
$\varphi^{-1}\xi^{-1}(Z)$ and
$\psi^{-1}(Z)=\xi\varphi^{-1}\xi^{-1}(Z)$ are all closed sets in
$J^2$. Thus $\psi$ is continuous, and hence $\psi$ is a
homeomorphism. Claim 2 is proved.

\medskip

Note that, for any $x \in J^2$ and $y=\xi(x)$, if $\xi^{-1}(y)$
contains more than one point, then $x=\xi^{-1}\xi(x)$ does not hold,
and we have only $x \in \xi^{-1}\xi(x)$. However, even if
$\xi^{-1}(y)$ contains more than one point, since
$\psi(y)=\xi\varphi\xi^{-1}(y)$ contains only one point, $\xi
\varphi\xi^{-1}\xi(x)=\xi\varphi(x)$ still holds. Hence we have
$\psi^2=\xi\varphi\xi^{-1}\xi\varphi\xi^{-1}=\xi\varphi^2\xi^{-1}$,
and in general, we have
\begin{equation} \label{eq:4-5}
\psi^n=\xi \varphi^n \xi^{-1} \mbox{\ \ for any\ } n \in \mathbb{N}.
 \end{equation}

Similarly, let $\psi^*=\xi\varphi^{-1}\xi^{-1}: J^2 \rightarrow
J^2$. Then $\psi^*$ is also a homeomorphism. For any $x \in J^2$,
since
$$\psi^*\psi(x)=\xi\varphi^{-1}\xi^{-1}\xi\varphi\xi^{-1}(x)=\xi\xi^{-1}(x)=x$$
and $\psi\psi^*\psi(x)=\psi(x)$, we have
$\psi^{-1}=\psi^*=\xi\varphi^{-1}\xi^{-1}$.

\medskip

{\bf Claim 3.}\ \ $\omega_{\psi}(\xi(x))=\xi(\omega_{\varphi}(x))$
for any $x \in J^2$.

\medskip

{\bf Proof of Claim 3.}\ \ For any $y \in \omega_{\varphi}(x)$,
there exist integers $0<n(1)<n(2)<n(3)<\cdots$ such that $\lim_{k
\rightarrow \infty} \varphi^{n(k)}(x)=y$, which with \eqref{eq:4-5}
implies
$$\lim_{k \rightarrow \infty}\psi^{n(k)}(\xi(x))=\lim_{k \rightarrow \infty}\xi\varphi^{n(k)}(x)=\xi(y).$$
This means that $\xi(y) \in \omega_{\psi}(\xi(x))$ and hence
$\xi(\omega_{\varphi}(x)) \subset \omega_{\psi}(\xi(x))$.

\medskip

Conversely, for any $y \in \omega_{\psi}(\xi(x))$, there exist
integers $0<n(1)<n(2)<n(3)<\cdots$ such that $\lim_{k \rightarrow
\infty} \psi^{n(k)}(\xi(x))=y$, which with \eqref{eq:4-5} implies
$$\lim_{k \rightarrow \infty} \xi \varphi^{n(k)}(x)=\lim_{k \rightarrow \infty}
\xi\varphi^{n(k)}\xi^{-1}(\xi(x))=\lim_{k \rightarrow
\infty}\psi^{n(k)}(\xi(x))=y.$$ Take integers
$0<k(1)<k(2)<k(3)<\cdots$ and $z \in \omega_{\varphi}(x)$ such that
$$\lim_{j \rightarrow \infty} \varphi^{n(k(j))}(x)=z,$$
we get $\xi(z)=y$. Thus $y \in \xi(\omega_{\varphi}(x))$ and hence
$\omega_{\psi}(\xi(x)) \subset \xi(\omega_{\varphi}(x))$. Claim 3 is
proved.

\medskip

Similarly, we have

\medskip

{\bf Claim 4.}\ \ $\alpha_{\psi}(\xi(x))=\xi(\alpha_{\varphi}(x))$
for any $x \in J^2$.

\medskip

We now verify that the homeomorphism $\psi: J^2 \rightarrow J^2$ has
the  properties (1)--(4):

\medskip

(1) For any $n \in \mathbb{N}-\mathbb{N}'$ and any $w \in W_n$, from
the definition of $\omega: \mathbb{N} \rightarrow \mathcal{A}$ with
case (a) we have $\omega_{\varphi}(\xi^{-1}(w))=\omega(n)=A'_{n1}$.
By Claim 3 we get
$$\omega_{\psi}(w)=\xi(\omega_{\varphi}(\xi^{-1}(w)))=\xi(A'_{n1})=X_{n1}.$$

(2) For any $n \in \mathbb{N}'$ and any $w \in W_n$, from the
definition of $\omega: \mathbb{N} \rightarrow \mathcal{A}$ with case
(b) we have $\omega_{\varphi}(\xi^{-1}(w))=\omega(n)=A''_{n1}$. By
Claim 3 we get
$$\omega_{\psi}(w)=\xi(\omega_{\varphi}(\xi^{-1}(w)))=\xi(A''_{n1})=Y_{n1}.$$

By Claim 4, the verifying of the properties (3) and (4) is similar,
and is omitted. Theorem \ref{thm:4-7} is proved.
\end{proof}

\begin{rem} \label{rem:4-8}
In Corollary \ref{thm:3-3}, the index set $\mathbb N^*$ can be an infinite
subset of $\mathbb{N}$, and can also be a finite subset of
$\mathbb{N}$. Similarly, in Theorem \ref{thm:3-5} and Theorem
\ref{thm:4-7}, the index set $\mathbb{N}$ can be replaced by a
finite subset of $\mathbb{N}$ or of $\mathbb{N} \cup \{0\}$. Of
course, in Theorem \ref{thm:4-7}, if the index set $\mathbb{N}$ is
replaced by a finite subset of $\mathbb{N} \cup \{0\}$, then the
condition
$$\lim_{n \rightarrow \infty}\mathrm{diam}(D_{n1})=
\lim_{n \rightarrow \infty}\mathrm{diam}(D_{n2})=0$$
will automatically vanish. In addition, in Theorem \ref{thm:3-5} and
Theorem \ref{thm:4-7}, the countable sets $V_n$ and $W_n$ need not
be dense in $J^2$ or even infinite sets.
\end{rem}

\section{Limit sets in the open square $\stackrel{\ \circ}{J}$$^{\;2}$}

Define the maps \ $p\;,\;q\;:\,\Bbb{R}^{\,2}\to\,\Bbb{R}$ \ by \
$p(x)\,=\,r$ \ and \ $q(x)\,=\,s$\ \ for any \
$x\,=\,(r,s)\,\in\,\Bbb{R}^{\,2}$\,,\ \;that is,\ \ we denote by \
$p(x)$\ \ and \ $q(x)$\ \ the abscissa and the ordinate of
\,$x$\,,\,\ respectively\,. \,

\begin{thm} \label{cor:4-9}
Let $Y_{\mathbb{N}}=\{y_{n1}, y_{n2}: n \in \mathbb{N}\}$ be a
countable infinite set in $\stackrel{\ \circ}{J}$$^{\;2}$, and
$\{W_n: n \in \mathbb{N}\}$ be a family of pairwise disjoint
countable sets in $\stackrel{\ \circ}{J}\mbox{$^2$}-Y_{\mathbb{N}}$,
in which each $W_n$ is dense in $J^2$. Suppose that
\begin{enumerate}
\item[{\rm (1)}] $\lim_{n \rightarrow \infty} d(y_{n1}, J_1)=\lim_{n \rightarrow
\infty} d(y_{n2}, J_{-1})=0$;

\medskip

\item[{\rm (2)}] For any $\beta \in p(Y_{\mathbb{N}})$, there exists
$\varepsilon_{\beta}>0$ such that $Y_{\mathbb{N}} \cap
p^{-1}([\beta-\varepsilon_{\beta}, \beta+\varepsilon_{\beta}])$ is a
finite set.
\end{enumerate}
Then there exists a zero entropy homeomorphism $\psi: J^2 \rightarrow J^2$ such
that, for any $n \in \mathbb{N}$ and any $w \in W_n$, one has
$\omega_{\psi}(w)=\{y_{n1}\}$ and $\alpha_{\psi}(w)=\{y_{n2}\}$.
\end{thm}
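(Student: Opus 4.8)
The plan is to realize Theorem \ref{cor:4-9} as the special case of Theorem \ref{thm:4-7} in which every floating set is a single point. Concretely, I would take $\mathbb{N}'=\mathbb{N}''=\mathbb{N}$ and set $Y_{n1}=\{y_{n1}\}$ and $Y_{n2}=\{y_{n2}\}$; since each $y_{ni}$ lies in the open square, these are genuine floating sets. For each $n$ I would then choose a cable $L_{n1}$ from $y_{n1}$ to $J_1$ and a cable $L_{n2}$ from $y_{n2}$ to $J_{-1}$, so that $X_{n1}=Y_{n1}\cup L_{n1}=L_{n1}$ and $X_{n2}=Y_{n2}\cup L_{n2}=L_{n2}$ are arcs. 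An arc is a tree, so once a hutch $D_{ni}$ is attached, Lemma \ref{lem:4-3} makes each $X_{ni}$ a permeable hanging set and places $X_{n1}\in\mathbf{X}(\mathbf{Y},1)$ and $X_{n2}\in\mathbf{X}(\mathbf{Y},-1)$. With these choices, conclusions (2) and (4) of Theorem \ref{thm:4-7} read $\omega_{\psi}(w)=Y_{n1}=\{y_{n1}\}$ and $\alpha_{\psi}(w)=Y_{n2}=\{y_{n2}\}$ for $w\in W_n$, which is exactly what is wanted.

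The substance of the proof is therefore the geometric construction of cables and hutches meeting the hypotheses of Theorem \ref{thm:4-7}: the discs $\{D_{n1},D_{n2}:n\in\mathbb{N}\}$ must be pairwise disjoint with $\mathrm{diam}(D_{ni})\to 0$, and the cables must avoid $\bigcup_m W_m$ so that each $W_n\subset\stackrel{\ \circ}{J}\mbox{$^2$}-X_{\mathbb{N}}$ (note that $W_n$ may still meet the hutch interiors, since $X_{\mathbb{N}}$ consists only of the cables). Here hypotheses (1) and (2) do the work. Condition (1) forces every accumulation point of $\{y_{n1}\}$ to lie on $J_1$ and every accumulation point of $\{y_{n2}\}$ to lie on $J_{-1}$; hence for each $\delta>0$ only finitely many $y_{n1}$ satisfy $d(y_{n1},J_1)\ge\delta$, so all but finitely many points sit arbitrarily close to their target edge. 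For such a point I would run a short, nearly vertical cable to the edge and thicken it into a thin, short disc, which forces $\mathrm{diam}(D_{ni})\to 0$. Condition (2) supplies the horizontal room for disjointness: the first coordinate $\beta=p(y_{ni})$ of each point has a strip $p^{-1}([\beta-\varepsilon_\beta,\beta+\varepsilon_\beta])$ meeting $Y_{\mathbb{N}}$ in only finitely many points, so near the edge endpoint of each cable $L_{ni}$ only finitely many other points crowd in, and a positive-width hutch can be fitted around $L_{ni}$ that avoids all of them. Processing the points in order and keeping each new hutch thin enough to miss the finitely many already built (and their finitely many traces on $J_1$ or $J_{-1}$), I would obtain the full pairwise disjoint family. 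Avoidance of the countable set $\bigcup_m W_m$ is then automatic: each cable can be chosen inside an uncountable pencil of otherwise disjoint arcs, and the countable set $\bigcup_m W_m$ can meet only countably many members of such a pencil; the same remark keeps the cables off the other points of $Y_{\mathbb{N}}$, though in any case the disjointness of the hutches already guarantees that $L_{ni}$ misses every $y_{mj}$ with $(m,j)\ne(n,i)$.

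With the cables and hutches in hand, Theorem \ref{thm:4-7} produces a homeomorphism $\psi:J^2\to J^2$ whose conclusions (2) and (4) give the desired limit sets. It remains to check $h_{top}(\psi)=0$. I would use the factorization $\psi=\xi\varphi\xi^{-1}$ from the proof of Theorem \ref{thm:4-7}, where $\xi$ is the compositive permeating, a continuous surjection of $J^2$, and $\varphi$ is the homeomorphism coming from Theorem \ref{thm:3-5}. By \eqref{eq:4-5} one has $\psi\circ\xi=\xi\circ\varphi$, so $\xi$ is a factor map from $(J^2,\varphi)$ onto $(J^2,\psi)$ and hence $h_{top}(\psi)\le h_{top}(\varphi)$. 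Since $\varphi$ is conjugate to a normally rising homeomorphism, whose nonwandering dynamics is the identity, $h_{top}(\varphi)=0$ by \cite[Corollary 8.6.1]{Wa82} (cf. Remark \ref{main-sensitive}); therefore $h_{top}(\psi)=0$.

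The main obstacle I anticipate is precisely the cable-and-hutch construction of the second paragraph: arranging pairwise disjointness together with $\mathrm{diam}(D_{ni})\to 0$ while simultaneously dodging the countable dense sets $W_m$. Conditions (1) and (2) are exactly tailored to remove it: (1) confines all accumulation to the two edges, so the diameters can be driven to zero, and (2) rules out a point of $Y_{\mathbb{N}}$ whose first coordinate is an accumulation value of infinitely many other points, which is the configuration that would make a positive-width hutch around a cable impossible. The remaining verifications—that each $X_{ni}=L_{ni}$ is a hanging set of the required type, that every $W_n$ stays dense in $J^2$ and avoids $X_{\mathbb{N}}$, and the entropy estimate—are then routine.
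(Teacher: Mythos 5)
Your proposal is correct and takes essentially the same route as the paper's own proof: the paper likewise reduces to Theorem \ref{thm:4-7} with singleton floating sets $Y_{ni}=\{y_{ni}\}$, straight-segment cables thickened into thin hutches, condition (2) used to confine each cable to a strip isolating its abscissa (which is what makes the hutches pairwise disjoint), condition (1) to force $\mathrm{diam}(D_{ni})\to 0$, and a countability argument to keep the cables off $\bigcup_m W_m$ (the paper chooses, for each abscissa $\beta$, a common slope whose direction avoids the countable set $Z$ of directions from points of $\bigcup_m W_m$ to points of $Y_{\mathbb{N}}$, which is your ``pencil'' argument in global form). The only organizational differences are that the paper builds all cables simultaneously---parallel segments within each abscissa group, so no inductive processing and hence no issue of later cables having to detour around earlier hutches---and that it settles the entropy claim by observing that $\psi$ is the identity on its nonwandering set, whereas you pass through the factor map $\xi$ and the conjugacy to a normally rising homeomorphism; both arguments are valid.
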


\begin{proof}
We regard the real plane $\mathbb{R}^2$ and the complex plane
$\mathbb{C}$ as the same, that is, any point $(r \cos \theta, r \sin
\theta) \in \mathbb{R}^2$ and the point $r
\mathrm{e}^{\mathrm{i}\theta} \in \mathbb{C}$ are regarded as the
same. Write $W_{\mathbb{N}}=\bigcup\{W_n: n \in \mathbb{N}\}$, and
$$Z=\{(w-y)/|w-y|: w \in W_{\mathbb{N}}, \mbox{\ and\ } y \in Y_{\mathbb{N}}\},$$
where $|z|$ denote the modulus of the complex number $z \in
\mathbb{C}$. Then $Z$ is a countably infinite set in the unit circle
$\mathbb{S}^1$.

\medskip

From the condition (2) we see that, for any $\beta \in
p(Y_{\mathbb{N}})$, there exists $\delta_{\beta}>0$ such that
$p(Y_{\mathbb{N}}) \cap [\beta-5\delta_{\beta},
\beta+5\delta_{\beta}]=\{\beta\}$, and $[\beta-5\delta_{\beta},
\beta+5\delta_{\beta}] \subset \stackrel{\ \circ}{J}$.

\medskip

For any $n \in \mathbb{N}$ and $i \in \mathbb{N}_2$, write
$Y_{ni}=\{y_{ni}\}$. Then $Y_{ni} \in \mathbf{Y}$. For any $\beta
\in p(Y_{\mathbb{N}})$, write $Y_{\beta}=Y_{\mathbb{N}} \cap
p^{-1}(\beta)$. Then $Y_{\beta}$ is a nonempty finite set. Take
$\mu_{\beta} \in (0, \delta_{\beta}]$ such that
$$(\mu_{\beta}+\mathrm{i})/|\mu_{\beta}+\mathrm{i}| \in
\mathbb{S}^1-Z.$$ For any $y \in Y_{\beta}$, if $y=y_{n1}$ for some
$n \in \mathbb{N}$, then there is a unique point $x_y=x_{n1} \in
J_1$ such that
$$(x_y-y)/|x_y-y|=(\mu_{\beta}+\mathrm{i})/|\mu_{\beta}+\mathrm{i}|,$$
and we take $[y, x_y]$ as the cable from $\{y\}=Y_{n1}$ to $J_1$. If
$y=y_{n2}$ for some $n \in \mathbb{N}$, then there is a unique point
$x_y=x_{n2} \in J_{-1}$ such that
$$(y-x_y)/|y-x_y|=(\mu_{\beta}+\mathrm{i})/|\mu_{\beta}+\mathrm{i}|,$$
and we take $[y, x_y]$ as the cable from $\{y\}=Y_{n2}$ to $J_{-1}$.
Noting that $\{[y, x_y]: y \in Y_{\beta}\}$ is a family of pairwise
disjoint straight line segments, which are all contained in
$$(\beta-2\mu_{\beta}, \beta+2\mu_{\beta}) \times J \subset
(\beta-2\delta_{\beta}, \beta+2\delta_{\beta}) \times J$$ and do not
intersect $Y_{\mathbb{N}}$, and of which the slopes are all
$1/\mu_{\beta}$, we can take a family $\mathbf{D}_{\beta}=\{D_y: y
\in Y_{\beta}\}$ of pairwise disjoint discs such that, for each $y
\in Y_{\beta}$, $D_y$ is a hutch of $[y, x_y]$ contained in
$(\beta-2\delta_{\beta}, \beta+2\delta_{\beta}) \times J$, and
$\mathrm{diam}(D_y)<3|x_y-y|/2$. Let
$\mathbf{D}=\bigcup\{\mathbf{D}_{\beta}: \beta \in
p(Y_{\mathbb{N}})\}$. For any $\beta \in p(Y_{\mathbb{N}})$, $\gamma
\in p(Y_{\mathbb{N}})-\{\beta\}$ and any $y \in Y_{\beta}$, $z \in
Y_{\gamma}$, we have
$$d(D_y, D_z)>d([\beta-2\delta_{\beta}, \beta+2\delta_{\beta}],
[\beta-2\delta_{\gamma},
\beta+2\delta_{\gamma}])>\max\{\delta_{\beta},
\delta_{\gamma}\}>0.$$ Thus $\mathbf{D}$ is a family of pairwise
disjoint discs.

\medskip

For any $y \in Y_{\mathbb{N}}$, if $y=y_{n1}$ for some $n \in
\mathbb{N}$ then we write $D_{n1}$ for $D_y$, and if $y=y_{n2}$ for
some $n \in \mathbb{N}$ then we write $D_{n2}$ for $D_y$. From the
condition (1) of the theorem, we get
$$\lim_{n \rightarrow \infty}\mathrm{diam}(D_{n1})=
\lim_{n \rightarrow \infty}\mathrm{diam}(D_{n2})=0.$$ By Theorem
\ref{thm:4-7}, there exists a homeomorphism $\psi: J^2 \rightarrow
J^2$ having the properties described in Theorem \ref{cor:4-9}.
Clearly, the topological entropy of $\psi$ is zero, since its restriction
to the nonwandering set is identity.
\end{proof}

In Theorem \ref{cor:4-9}, the homeomorphisms $\psi: J^2 \rightarrow
J^2$ satisfy $$\psi\big(\stackrel{\
\circ}{J}\mbox{$^{2}$}\big)=\stackrel{\ \circ}{J}\mbox{$^2$}.$$ Thus
we obtain the homeomorphisms $\psi|\stackrel{\
\circ}{J}\mbox{$^2$}:\; \stackrel{\ \circ}{J}\mbox{$^2$} \rightarrow
\stackrel{\ \circ}{J}\mbox{$^2$}$.\; Define a homeomorphism\; $H:\;
\stackrel{\ \circ}{J}\mbox{$^2$} \rightarrow \mathbb{R}^2$ by
$$H(r,s)=\big(\tan(r\pi/2), \tan(s\pi/2)\big) \mbox{\ \ for any\ }
(r,s) \in \stackrel{\ \circ}{J}\mbox{$^2$}.$$ Then, by means of $H$
and $\psi|\stackrel{\ \circ}{J}\mbox{$^2$}$, we get some
homeomorphisms of $\mathbb{R}^2$, which have properties similar to
$\psi: J^2 \rightarrow J^2$. For example, we have

\begin{cor} \label{thm:5-1}
Let $X_{\mathbb{N}}=\{x_{n1}, x_{n2}: n \in \mathbb{N}\}$ be a
countable infinite set in $\mathbb{R}^2$, and $\{V_n: n \in
\mathbb{N}\}$ be a family of pairwise disjoint countable sets in
$\mathbb{R}^2-X_{\mathbb{N}}$, in which each $V_n$ is dense in
$\mathbb{R}^2$. Suppose that
\begin{enumerate}
\item[{\rm (1)}] $\lim_{n \rightarrow \infty} q(x_{n1})=\lim_{n \rightarrow
\infty} q(x_{n2})=0$;

\medskip

\item[{\rm (2)}] For any $\beta \in p(X_{\mathbb{N}})$, there exists
$\delta_{\beta}>0$ such that $X_{\mathbb{N}} \cap
p^{-1}([\beta-\delta_{\beta}, \beta+\delta_{\beta}])$ is a finite
set.
\end{enumerate}
Then there exists a homeomorphism $F: \mathbb{R}^2 \rightarrow
\mathbb{R}^2$ such that, for any $n \in \mathbb{N}$ and any $v \in
V_n$, one has $\omega_F(v)=\{x_{n1}\}$ and $\alpha_F(v)=\{x_{n2}\}$.
\end{cor}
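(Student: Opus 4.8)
The plan is to reduce the statement to Theorem \ref{cor:4-9} by transporting the data from $\mathbb{R}^2$ to the open square through the homeomorphism $H$ and then conjugating. Recall that $H:\stackrel{\ \circ}{J}\mbox{$^2$}\to\mathbb{R}^2$, $H(r,s)=(\tan(r\pi/2),\tan(s\pi/2))$, is the product of two increasing homeomorphisms of $(-1,1)$ onto $\mathbb{R}$, with inverse $H^{-1}(a,b)=\big((2/\pi)\arctan a,(2/\pi)\arctan b\big)$; as $s\to1$ (resp. $s\to-1$) the second coordinate of $H$ tends to $+\infty$ (resp. $-\infty$), so $H$ carries the edges $J_1$ and $J_{-1}$ to infinity. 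First I would set $y_{ni}=H^{-1}(x_{ni})$ and $W_n=H^{-1}(V_n)$ for all $n\in\mathbb{N}$ and $i\in\mathbb{N}_2$. Since $H^{-1}$ is a homeomorphism of $\mathbb{R}^2$ onto the dense subset $\stackrel{\ \circ}{J}\mbox{$^2$}$ of $J^2$, the set $Y_{\mathbb{N}}=\{y_{n1},y_{n2}:n\in\mathbb{N}\}$ is a countable infinite subset of $\stackrel{\ \circ}{J}\mbox{$^2$}$, the $W_n$ are pairwise disjoint countable subsets of $\stackrel{\ \circ}{J}\mbox{$^2$}-Y_{\mathbb{N}}$, and each $W_n$ is dense in $J^2$, so the non-quantitative hypotheses of Theorem \ref{cor:4-9} hold automatically.

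It then remains to check the two numbered hypotheses. As $p\circ H^{-1}=(2/\pi)\arctan\circ\,p$ is a coordinatewise increasing homeomorphism sending intervals to intervals, the local-finiteness condition (2) on vertical strips $p^{-1}([\beta-\delta_\beta,\beta+\delta_\beta])$ in $\mathbb{R}^2$ transfers directly to the analogous condition for $Y_{\mathbb{N}}$ in the square. Hypothesis (1), combined with the explicit form of the second coordinate of $H$, fixes the limiting vertical position of the $y_{ni}$ and yields the edge-approach $d(y_{n1},J_1)\to0$ and $d(y_{n2},J_{-1})\to0$ required by hypothesis (1) of Theorem \ref{cor:4-9}; this is the one place where a quantitative hypothesis is used, and it is exactly what forces the hutch diameters $\mathrm{diam}(D_{n1})$ and $\mathrm{diam}(D_{n2})$ to tend to $0$. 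With both hypotheses verified, Theorem \ref{cor:4-9} supplies a homeomorphism $\psi:J^2\to J^2$ with $\psi(\stackrel{\ \circ}{J}\mbox{$^2$})=\stackrel{\ \circ}{J}\mbox{$^2$}$ such that $\omega_\psi(w)=\{y_{n1}\}$ and $\alpha_\psi(w)=\{y_{n2}\}$ for every $w\in W_n$.

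Finally I would set $F=H\circ(\psi|\stackrel{\ \circ}{J}\mbox{$^2$})\circ H^{-1}$. Since $\psi$ restricts to a self-homeomorphism of the open square and $H$ is a homeomorphism of that square onto $\mathbb{R}^2$, the map $F$ is a homeomorphism of $\mathbb{R}^2$, and $F^k=H\circ(\psi^k|\stackrel{\ \circ}{J}\mbox{$^2$})\circ H^{-1}$ for all $k\in\mathbb{Z}$. For $v\in V_n$ write $w=H^{-1}(v)\in W_n$; since $\omega_\psi(w)$ is the single interior point $y_{n1}$, we have $\psi^k(w)\to y_{n1}$ as $k\to+\infty$, and because $H$ is continuous at the interior point $y_{n1}$ this gives $F^k(v)=H(\psi^k(w))\to H(y_{n1})=x_{n1}$, whence $\omega_F(v)=\{x_{n1}\}$; the identity $\alpha_F(v)=\{x_{n2}\}$ follows identically from $\psi^{-k}(w)\to y_{n2}$. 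The main obstacle is that $H$ is not proper—it blows the boundary of the square up to infinity—so a priori conjugation could turn a bounded orbit into one escaping to infinity. What rescues the argument is precisely that the limit sets delivered by Theorem \ref{cor:4-9} are single \emph{interior} points: the forward (resp. backward) $\psi$-orbit of $w$ eventually remains in a compact neighborhood of $y_{n1}$ (resp. $y_{n2}$) inside $\stackrel{\ \circ}{J}\mbox{$^2$}$, on which $H$ is a genuine homeomorphism, so no orbit escapes and the limit-set transfer above is legitimate. Verifying this localization, together with the hypothesis transfer, is the crux; the remaining steps are formal.
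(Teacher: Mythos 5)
Your route is exactly the paper's own: pull the data back through $H^{-1}$, invoke Theorem \ref{cor:4-9} on the square to get $\psi$, and conjugate, $F=H\circ(\psi|(-1,1)^2)\circ H^{-1}$. The bookkeeping you do for the non-quantitative hypotheses and for condition (2) is correct, and your final localization argument --- that the non-properness of $H$ is harmless because the limit sets produced by Theorem \ref{cor:4-9} are single \emph{interior} points, so tail orbits stay in compact subsets of the open square --- is correct and in fact more careful than anything the paper writes down.

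However, the step you yourself identify as the crux, the transfer of hypothesis (1), is false as you state it, and this is a genuine gap. Since $H^{-1}(a,b)=\big((2/\pi)\arctan a,(2/\pi)\arctan b\big)$, the hypothesis $\lim_{n\to\infty}q(x_{n1})=\lim_{n\to\infty}q(x_{n2})=0$ gives $q(y_{n1})\to 0$ and $q(y_{n2})\to 0$: the pulled-back points approach the horizontal midline $J\times\{0\}$ of the square, so $d(y_{n1},J_1)\to 1$ and $d(y_{n2},J_{-1})\to 1$, and hypothesis (1) of Theorem \ref{cor:4-9} fails, so that theorem cannot be invoked. This contradicts your own correct opening observation that $H$ sends the edges $J_{\pm 1}$ to infinity: edge-approach in the square corresponds to $q\to\pm\infty$ in the plane, not to $q\to 0$. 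The conjugation argument works precisely when condition (1) of Corollary \ref{thm:5-1} is read as $\lim_{n\to\infty}q(x_{n1})=+\infty$ and $\lim_{n\to\infty}q(x_{n2})=-\infty$, which is evidently what was intended; the paper's own one-line proof (``the conditions (1) and (2) in Theorem \ref{cor:4-9} hold'') silently makes the same false assertion, so your proposal reproduces the paper's gap rather than introducing a new one. Note moreover that under the literal hypothesis no repair by choosing a cleverer conjugacy is possible: the configuration $x_{n1}=(1/n,1/n)$, $x_{n2}=(1/n,-1/n)$ satisfies (1) and (2) as stated, yet both sequences converge to the origin, so under \emph{any} homeomorphism of $\mathbb{R}^2$ onto $(-1,1)^2$ their images converge to a single interior point of the square and cannot approach the two disjoint edges $J_1$ and $J_{-1}$. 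So what you have proved is the corrected statement, not the stated one; a blind proof should have flagged this discrepancy instead of asserting the false implication.
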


\begin{proof}
For $n \in \mathbb{N}$, let $Y_{\mathbb{N}}=H^{-1}(X_{N})$,
$y_{n1}=H^{-1}(x_{n1})$, $y_{n2}=H^{-1}(x_{n2})$, and
$W=H^{-1}(V_n)$. Then the conditions (1) and (2) in Theorem
\ref{cor:4-9} hold, and there exists a homeomorphism $\psi: J^2
\rightarrow J^2$ having the properties described in this theorem.
Define $F: \mathbb{R}^2 \rightarrow \mathbb{R}^2$ by $F=H\psi
H^{-1}$. Then $F$ meets the requirement.
\end{proof}

\begin{thm} \label{cor:4-10}
For any given $m \in \mathbb{N}$, let $\{T_{n1}, T_{n2}: n \in
\mathbb{N}_{m}\}$ be a family of pairwise disjoint trees in
$\stackrel{\ \circ}{J}\mbox{$^2$}$, and let $T_{01}$ be an arc in
$\stackrel{\ \circ}{J_1}$, $T_{02}$ be an arc in $\stackrel{\
\circ}{J_{-1}}$. Let $\{W_n: n \in \mathbb{N}_m \cup \{0\}\}$ be a
family of pairwise disjoint countable sets in $\stackrel{\
\circ}{J}\mbox{$^2$}-\bigcup\{T_{n1} \cup T_{n2}: n \in
\mathbb{N}_m\}$, in which each $W_n$ is dense in $J^2$. Then there
exists a homeomorphism $\psi: J^2 \rightarrow J^2$ such that, for
any $n \in \mathbb{N}_m \cup \{0\}$ and any $w \in W_n$, one has
$\omega_{\psi}(w)=T_{n1}$ and $\alpha_{\psi}(\omega)=T_{n2}$.
\end{thm}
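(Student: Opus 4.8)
The plan is to deduce the statement directly from Theorem \ref{thm:4-7}, by presenting each prescribed limit set as the relevant part of a permeable hanging set. For $n\in\mathbb N_m$ the trees $T_{n1},T_{n2}$ are floating, so I would dress them with cables: choose a cable $L_{n1}$ from $T_{n1}$ to $J_1$ and a cable $L_{n2}$ from $T_{n2}$ to $J_{-1}$, and set $X_{n1}=T_{n1}\cup L_{n1}$ and $X_{n2}=T_{n2}\cup L_{n2}$. Since a cable meets the tree only at one of its endpoints and meets $\partial J^2$ only at the other, each $X_{ni}$ is again a tree meeting $\partial J^2$ in a single point; thus $X_{n1}\in\mathbf X(\mathbf Y,1)$ and $X_{n2}\in\mathbf X(\mathbf Y,-1)$ with floating parts $Y_{n1}=T_{n1}$ and $Y_{n2}=T_{n2}$. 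For the two boundary arcs I would simply put $X_{01}=T_{01}\in\mathbf X(1,Ac)$ and $X_{02}=T_{02}\in\mathbf X(-1,Ac)$. By Lemma \ref{lem:4-3} every tree hanging set is permeable, so all of the $X_{ni}$ are permeable. I would then apply Theorem \ref{thm:4-7} over the index set $\mathbb N_m\cup\{0\}$ with $\mathbb N'=\mathbb N''=\mathbb N_m$: its conclusions (2) and (4) yield $\omega_\psi(w)=Y_{n1}=T_{n1}$ and $\alpha_\psi(w)=Y_{n2}=T_{n2}$ for $n\in\mathbb N_m$, while (1) and (3) yield $\omega_\psi(w)=X_{01}=T_{01}$ and $\alpha_\psi(w)=X_{02}=T_{02}$ for $n=0$, which is exactly the assertion. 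By Remark \ref{rem:4-8} the hypothesis $\mathrm{diam}(D_{ni})\to0$ of Theorem \ref{thm:4-7} is vacuous here, the index set being finite.

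All the real work lies in the geometric construction of the cables and the hutches, and this is the step I expect to be the main obstacle. The cables must be chosen so that the trees $X_{ni}$ are pairwise disjoint, disjoint from the remaining $T_{kj}$ and from $T_{01},T_{02}$, and so that each $\stackrel{\ \circ}{L}_{ni}$ avoids the countable set $W_{\mathbb N}=\bigcup_n W_n$ (the latter guarantees $W_n\subset\stackrel{\ \circ}{J}\mbox{$^2$}-X_{\mathbb N}$, as Theorem \ref{thm:4-7} requires). I would draw the cables one at a time. The point is that at each stage the complement in the open square of all the floating trees together with the finitely many arcs already drawn is open and connected: each already built piece $X_{ni}=T_{ni}\cup L_{ni}$ is a dendrite meeting $\partial J^2$ in a single point, and finitely many pairwise disjoint dendrites, each meeting the boundary in at most one point, do not separate the open square. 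Hence the next floating tree can still be joined to its target edge by an arc in this connected region; a small perturbation within a thin tube of parallel routes makes the arc miss the countable set $W_{\mathbb N}$, and its foot on $J_{\pm1}$ can be taken to avoid $T_{01},T_{02}$ and the feet already used.

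It remains to thicken. The sets in $\{X_{ni}:n\in\mathbb N_m\cup\{0\},\ i\in\mathbb N_2\}$ are finitely many pairwise disjoint compact trees hanging on $J_1$ or $J_{-1}$, hence lie at mutually positive distance; a sufficiently thin planar regular neighborhood of each, truncated at $\partial J^2$, is a disc $D_{ni}$, and these are pairwise disjoint. Each $D_{ni}$ is a hutch of $X_{ni}$ in the sense of Definition \ref{handing set} (for $X_{01}=T_{01}\subset J_1$ the neighborhood is a small half-disc along $J_1$ whose trace on $J_1$ contains $T_{01}$ in its relative interior, and similarly for $X_{02}$). With these $X_{ni}$, $D_{ni}$ and the given family $\{W_n\}$ all the hypotheses of Theorem \ref{thm:4-7} hold, and the homeomorphism it produces is the required $\psi$. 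The only delicate ingredient, the simultaneous disjoint routing of the cables, is supplied by the non-separation property noted above; the rest is a finite and standard planar construction.
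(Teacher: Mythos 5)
Your proposal is correct and follows essentially the same route as the paper: attach pairwise disjoint cables (missing the countable set $\bigcup_n W_n$) to the floating trees, note via Lemma \ref{lem:4-3} that the resulting tree hanging sets and the boundary arcs $T_{01},T_{02}$ are permeable, and apply Theorem \ref{thm:4-7} with $\mathbb{N}'=\mathbb{N}''=\mathbb{N}_m$ over the finite index set $\mathbb{N}_m\cup\{0\}$ (Remark \ref{rem:4-8}). Your non-separation and thickening arguments simply fill in the construction of the cables and hutches that the paper dismisses as ``easy to show.''
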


\begin{proof}
Write $\mathbf{W}=\bigcup \{W_n: n \in \mathbb{N}_m \cup \{0\}\}$.
It is easy to show that there exist a family $\{D_{n1}, D_{n2}: n
\in \mathbb{N}_m \cup \{0\}\}$ of pairwise disjoint disc and a
family $\{L_{n1}, L_{n2}: n \in \mathbb{N}_m\}$ of pairwise disjoint
piecewise linear arcs such that

\medskip

(1)\ \ For $n \in \mathbb{N}_m \cup \{0\}$, $D_{n1} \in
\mathbf{X}(1, Ac)$, and $D_{n2} \in \mathbf{X}(-1, Ac)$;

\medskip

(2)\ \ $T_{01} \subset \mathrm{Int}(D_{01} \cup J_1)$, and $T_{02}
\subset \mathrm{Int}(D_{02} \cup J_{-1})$;

\medskip

(3)\ \ For $n \in \mathbb{N}_m$, $T_{n1} \subset \stackrel{\
\circ}{D_{n1}}$, and $T_{n2} \subset \stackrel{\ \circ}{D_{n2}}$;

\medskip

(4)\ \ For $n \in \mathbb{N}_m$,\; $L_{n1}$ is a cable from $T_{n1}$
to $J_1$,\; $D_{n1}$ is a hutch of $L_{n1} \cup T_{n1}$,\; and
$L_{n1} \cap \mathbf{W}=\emptyset$;

\medskip

(5)\ \ For $n \in \mathbb{N}_m$,\; $L_{n2}$ is a cable from $T_{n2}$
to $J_{-1}$,\; $D_{n2}$ is a hutch of $L_{n2} \cup T_{n2}$,\; and
$L_{n2} \cap \mathbf{W}=\emptyset$.

\medskip

For $n \in \mathbb{N}_m$, from Lemma \ref{lem:4-3} we see that
$L_{n1} \cup T_{n1}$ and $L_{n2} \cup T_{n2}$ are permeable hanging
sets. Of course, $T_{01}$ and $T_{02}$ are also permeable hanging
sets. By Theorem \ref{thm:4-7}, there exists a homeomorphism $\psi:
J^2 \rightarrow J^2$ having the properties described in Theorem
\ref{cor:4-10}.
\end{proof}

\subsection*{Acknowledgements}
Jiehua Mai and Fanping Zeng are supported by NNSF of China (Grant
No. 12261006) and Project of Guangxi First Class Disciplines of
Statistics (No. GJKY-2022-01); Enhui Shi is supported by NNSF of
China (Grant No. 12271388); Kesong Yan is supported by NNSF of China
(Grant No. 12171175).

\end{document}